%

\documentclass[12pt]{amsart}
\usepackage{latexsym}


\usepackage{amssymb}
\usepackage{bm}
\usepackage{graphicx}
\usepackage{psfrag}
\usepackage{color}
\usepackage{hyperref}
\hypersetup{colorlinks=true, linkcolor=blue, citecolor=red, urlcolor=wine}
\usepackage{url}
\usepackage{algpseudocode}
\usepackage{fancyhdr}
\usepackage{xy}
\input xy
\xyoption{all}
\usepackage{stmaryrd}


\voffset=-1.4mm
\oddsidemargin=14pt
\evensidemargin=14pt 
\topmargin=26pt
\headheight=9pt     
\textheight=576pt
\textwidth=441pt 
\parskip=0pt plus 4pt


\pagestyle{fancy}
\fancyhf{}

\fancyhead[CE]{\fontsize{9}{11}\selectfont S.~T. CHAPMAN, F.~GOTTI, AND M.~GOTTI}
\fancyhead[CO]{\fontsize{9}{11}\selectfont WHEN IS A PUISEUX MONOID ATOMIC?}
\fancyhead[LE,RO]{\thepage}

\newtheorem*{maintheorem*}{Main Theorem}
\newtheorem{theorem}{Theorem}[section]
\newtheorem{prop}[theorem]{Proposition}

\newtheorem{lemma}[theorem]{Lemma}
\newtheorem{cor}[theorem]{Corollary}

\theoremstyle{definition}
\newtheorem{definition}[theorem]{Definition}
\newtheorem{remark}[theorem]{Remark}
\newtheorem{example}[theorem]{Example}
\numberwithin{equation}{section}

%


\newcommand{\nn}{\mathbb{N}}
\newcommand{\pp}{\mathbb{P}}
\newcommand{\qq}{\mathbb{Q}}
\newcommand{\rr}{\mathbb{R}}
\newcommand{\zz}{\mathbb{Z}}

\providecommand\ldb{\llbracket}
\providecommand\rdb{\rrbracket}
\newcommand\pval{\mathsf{v}_p}
\newcommand{\gp}{\textsf{gp}}

\keywords{Puiseux monoids, atomicity, factorization theory, Pr\"ufer monoids, numerical monoids, ACCP, BF-monoids, FF-monoids, HF-monoids}

\subjclass[2010]{Primary: 20M13; Secondary: 06F05, 20M14}

\begin{document}

\title{When Is a Puiseux Monoid Atomic?}
 
\author{Scott T. Chapman}
\address{Department of Mathematics\\Sam Houston State University\\Huntsville, TX 77341}
\email{scott.chapman@shsu.edu}


\author{Felix Gotti}
\address{Department of Mathematics\\University of Florida\\Gainesville, FL 32611}
\email{felixgotti@ufl.edu}

\author{Marly Gotti}
\address{Research and Development \\ Biogen \\ Cambridge \\ MA 02142}
\email{marly.cormar@biogen.com}

\maketitle

\date{\today}
	
	\begin{abstract}
		A Puiseux monoid is an additive submonoid of the nonnegative rational numbers. If $M$ is a Puiseux monoid, then the question of whether each non-invertible element of $M$ can be written as a sum of irreducible elements (that is, $M$ is atomic) is surprisingly difficult. Although various techniques have been developed over the past few years to identify subclasses of Puiseux monoids that are atomic, no general characterization of such monoids is known. Here we survey some of the most relevant aspects related to the atomicity of Puiseux monoids. We provide characterizations of when $M$ is finitely generated, factorial, half-factorial, other-half-factorial, Pr\"ufer, seminormal, root-closed, and completely integrally closed. In addition to the atomicity, characterizations are also not known for when $M$ satisfies the ACCP, the bounded factorization property, or the finite factorization property. In each of these cases, we construct an infinite class of Puiseux monoids satisfying the corresponding property.
	\end{abstract}


\medskip
\section{Introduction}
\label{sec:intro}

In a standard undergraduate abstract algebra course, or in a beginning graduate algebra course, the study of monoids plays a supporting role to the study of groups, rings, modules, and fields.  If you are interested in studying monoids, then where should you begin?  Since most group and ring curricula begin with the study of the integers, the study of additive or multiplicative submonoids of $\mathbb{Z}$ might be a logical candidate.  While the properties of multiplicative submonoids of $\mathbb{Z}$ are probably partially covered in a beginning number theory course, the additive properties of (cofinite) submonoids of the nonnegative integers, called \textit{numerical monoids}, are normally relegated to those interested in the topic for research purposes.  This is disappointing since many of the interesting aspects of these monoids (such as the Frobenius number or the Ap\'{e}ry set) can be easily comprehended by an advanced undergraduate.  The same is true to a far lesser extent for additive submonoids of the nonnegative rational numbers (which we denote by $\mathbb{Q}_{\geq 0}$).  Such submonoids pose much deeper algebraic questions, as opposed to their numerical monoid cousins.

The additive submonoids of $\qq_{\ge 0}$ naturally appear in commutative ring theory as valuations of subdomains of the field of Puiseux series, which was first studied by the French mathematician Victor A. Puiseux back in 1850~\cite{vP1850}. Given this connection and to honor Victor Puiseux, additive submonoids of $\qq_{\ge 0}$ have been investigated under the term \emph{Puiseux monoids} (see~\cite[Section~3]{fG17} for more details on this). Here we explore Puiseux monoids in connection with the following basic question: when can every element of a Puiseux monoid be written as a sum of irreducible elements? In general, a monoid where each element has this property is known as an \emph{atomic} monoid.  

Why is the atomic property interesting?  In general, atomic integral domains are barely mentioned in a beginning course of algebra. Clearly, the property of being atomic is a natural relaxation of that of being a UFD. Many relevant classes of integral domains consist of members that are atomic but not UFDs, including those of Dedekind domains (in particular, rings of algebraic integers) and, more generally, Noetherian domains and Krull domains. For some of these classes, the atomic structure of their members determines, up to a certain extent, some of their algebraic properties. For instance, it was proved by Carlitz~\cite{lC60} that a ring of algebraic integers is half-factorial (i.e., the lengths of any two irreducible factorizations of an element are equal) if and only if the size of its class group is at most $2$. The atomic structure of many classes of integral domains has been a popular topic in the mathematical literature over the past 30 years (see~\cite{GH06} and the references therein).

Why do we narrow this investigation down to monoids?  It turns out that several problems involving factorizations of elements in an integral domain depend solely on its multiplicative structure. As a result, many of the papers written over the past three decades treat the phenomenon of nonunique factorizations in the context of monoids. In particular,~\cite{BCKR} and~\cite{CHM06} pioneered the study of factorization problems in numerical monoids. Since then, many papers have been dedicated to the atomicity and factorization of numerical monoids~\cite{ACHP07,CCMMP17,CHK09,GS18} and some of their higher-rank generalizations~\cite{CKO02,CO17,GOS13,fG20}. More recently, a systematic investigation of the atomicity of Puiseux monoids has been initiated~(see~\cite{CGG19,fG19a} and references therein).

Why do we study Puiseux monoids?  Although the atomicity of Puiseux monoids has earned attention only in the last few years, since the 1970s Puiseux monoids have been crucial in the construction of various significant examples in commutative ring theory. In 1974, Grams~\cite{aG74} used an atomic Puiseux monoid as the main ingredient to construct the first example of an atomic integral domain that does not satisfy the ascending chain condition on principal ideals (or ACCP); this refuted Cohn's assertion that every atomic integral domain satisfies the ACCP \cite[Proposition 1.1]{PMC}. In addition, in~\cite{AAZ90}, Anderson, Anderson, and Zafrullah appealed to Puiseux monoids to construct various needed examples of integral domains satisfying certain prescribed properties. More recently, Puiseux monoids have played an important role in~\cite{CG19}, where Coykendall and the second author partially answered a question on the atomicity of monoid rings posed by Gilmer in the 1980s (see~\cite[p. 189]{rG84}).

Puiseux monoids have also been important in factorization theory. For instance, the class of Puiseux monoids comprises the first (and only) examples known so far of primary atomic monoids with irrational elasticity (this class was found in \cite[Section~4]{GGT19} via \cite[Theorem~3.2]{GO17}). A Puiseux monoid is a suitable additive structure containing simultaneously several copies of numerical monoids independently generated. This fact has been harnessed by Geroldinger and Schmid \cite[Theorem~3.3]{GS18} to achieve a nice realization theorem for the sets of lengths of numerical monoids (the \textsc{Monthly} article~\cite{aG16} introduces readers to sets of lengths).

Given the relevance of the class of Puiseux monoids in both commutative algebra and factorization theory, as well as the substantial attention they have received in the last few years, we compile here some of the most significant developments concerning their atomicity. In Section~\ref{sec:prelim}, we review the basic notation and definitions necessary for the remainder of our work. In Section~\ref{sec:closure nad conductor} we show that there are uncountably many Puiseux monoids up to isomorphism, and that they make up the class of all additive submonoids of $\qq$ that are not groups (Propositions~\ref{Gilmer} and~\ref{prop:there are uncountably many PMs}). In addition, we describe both the root and the complete integral closures as well as the conductor of a Puiseux monoid. Section~\ref{sec:atomicity} delves into issues related to atomicity. For each atomic property in the well-known diagram
\begin{equation*} 
	\textbf{UFM} \ \Rightarrow \ \textbf{FFM} \ \Rightarrow \ \textbf{BFM} \ \Rightarrow \ \textbf{ACCP} \ \Rightarrow \ \textbf{atomic monoid}
\end{equation*}
we present an infinite class of Puiseux monoids satisfying such a property (the atomic classes involved in the diagram are defined in Section~\ref{sec:prelim}). In addition, we provide examples testifying that none of the four implications in the above implication diagram is reversible in the class of Puiseux monoids. Finally, we show that the half-factorial and the unique factorization properties are equivalent for Puiseux monoids, and we characterize them.



Before moving forward, we highlight some connections and references related to previous studies of Puiseux monoids that we will not pursue here. The elasticity of Puiseux monoids has been studied in~\cite{GO17,mG19} while their systems of sets of lengths have received some attention in~\cite{fG19a}. In addition, factorization invariants of Puiseux monoids and numerical monoids have been compared and contrasted in~\cite{CGG19}. Transfer homomorphisms from Puiseux monoids were studied in~\cite{fG18} and monoid algebras of Puiseux monoids have been considered in~\cite{ACHZ07,CG19,GK20,fG19d}. Finally, some connections between Puiseux monoids and music theory have been recently highlighted by Bras-Amor\'{o}s in~\cite{mBA19}.

\medskip
\section{Preliminary}
\label{sec:prelim}

In this section, we introduce the relevant concepts of commutative monoids and factorization theory required to follow our exposition. General references for background information can be found in~\cite{pG01} for commutative monoids and in~\cite{GH06} for atomic monoids and factorization theory.

\smallskip
\subsection{General Notation}

Here we let $\nn$ denote the set of positive integers while we set $\nn_0 := \nn \cup \{0\}$. Also, we let $\pp$ denote the set of all prime numbers. For $a,b \in \zz$ we let $\ldb a,b \rdb$ denote the set of integers between $a$ and $b$, i.e., 
\[
	\ldb a,b \rdb := \{z \in \zz \mid a \le z \le b\}.
\]
Note that $\ldb a,b \rdb$ is empty provided that $a > b$. For $X \subseteq \rr$ and $r \in \rr$, we set
\[
	X_{\ge r} := \{x \in X \mid x \ge r\}
\]
and we use the notations $X_{> r}, X_{\le r}$, and $X_{< r}$ in a similar way. If $q \in \qq \setminus \{0\}$, then we call the unique $n \in \zz$ and $d \in \nn$ such that $q = n/d$ and $\gcd(n,d)=1$ the \emph{numerator} and \emph{denominator} of $q$ and denote them by $\mathsf{n}(q)$ and $\mathsf{d}(q)$, respectively. Finally, for $Q \subseteq \qq \setminus \{0\}$, we set
\[
	\mathsf{n}(Q) := \{\mathsf{n}(q) \mid q \in Q\} \quad \text{ and } \quad \mathsf{d}(Q) := \{\mathsf{d}(q) \mid q \in Q\}.
\]

\smallskip
\subsection{Commutative Monoids}

Throughout this article, the term \emph{monoid} stands for a commutative and cancellative semigroup with identity. Unless we specify otherwise, monoids are written additively, with identity element~$0$. Let $M$ be a monoid. We let~$M^\bullet$ denote the set $M \setminus \{0\}$ while we let $U(M)$ denote the set of invertible elements of~$M$. When $M^\bullet = \emptyset$ we say that $M$ is \emph{trivial} and when $U(M) = \{0\}$ we say that~$M$ is \emph{reduced}.

For $S \subseteq M$, we let $\langle S \rangle$ denote the smallest (under inclusion) submonoid of $M$ containing $S$, i.e., the submonoid of $M$ generated by $S$. The monoid $M$ is \emph{finitely generated} if $M$ can be generated by a finite set. An element $a \in M \setminus U(M)$ is an \emph{atom} provided that the equality $a = x+y$ for $x,y \in M$ implies that either $x \in U(M)$ or $y \in U(M)$. The set of atoms of $M$ is denoted by $\mathcal{A}(M)$. The monoid $M$ is \emph{atomic} if each element in $M \setminus U(M)$ can be written as a sum of atoms. By \cite[Proposition~2.7.8(4)]{GH06}, every finitely generated monoid is atomic. On the other hand, $M$ is \emph{antimatter} if $\mathcal{A}(M) = \emptyset$.

A subset $I$ of $M$ is called an \emph{ideal} of $M$ provided that $I + M \subseteq I$ (or, equivalently, $I + M = I$). An ideal $I$ is said to be \emph{principal} if $I = x + M$ for some $x \in M$, and~$M$ satisfies the \emph{ascending chain condition on principal ideals} (or \emph{ACCP}) provided that every increasing sequence of principal ideals of $M$ eventually stabilizes. It is well known (and not hard to argue) that every monoid satisfying the ACCP must be atomic \cite[Proposition~1.1.4]{GH06}.

An equivalence relation $\rho \subseteq M \times M$ is a \emph{congruence} if it is compatible with the operation of the monoid $M$, i.e., for all $x,y,z \in M$ with $(x,y) \in \rho$ it follows that $(z+x,z+y) \in \rho$. It can be readily verified that the set $M/\rho$ consisting of the equivalence classes of a congruence $\rho$ is a commutative semigroup with identity. For $x,y \in M$, we say that $x$ \emph{divides} $y$ \emph{in} $M$ and write $x \mid_M y$ provided that $x + x' = y$ for some $x' \in M$. Two elements $x,y \in M$ are \emph{associates} if $y = x + u$ for some $u \in U(M)$. Being associates defines a congruence on $M$ whose semigroup of classes is a monoid; this monoid is usually denoted by $M_{\text{red}}$. Observe that when $M$ is reduced one can identify $M_{\text{red}}$ with $M$.

Given a monoid $M$ we can form the group $\gp(M) := \{x-y \mid x,y \in M\}$ much in the same way that the integers are constructed from the natural numbers. Here $\gp(M)$ is an abelian group (unique up to isomorphism) satisfying the property that any abelian group containing a homomorphic image of $M$ also contains a homomorphic image of $\gp(M)$. We call $\gp(M)$ the \emph{difference group}\footnote{The difference group is called the quotient group when monoids are written multiplicatively.} of $M$. The \emph{rank} of a monoid $M$ is the rank of $\gp(M)$ when viewed as a $\mathbb{Z}$-module (i.e., the size of the largest linearly independent set in $\gp(M)$ over $\mathbb{Z}$).  

A \emph{numerical monoid} is a submonoid $N$ of $(\nn_0,+)$ that satisfies $|\nn_0 \setminus N| < \infty$. If $N \neq \nn_0$, then $\max (\nn_0 \! \setminus \! N)$ is called the \emph{Frobenius number} of $N$. Numerical monoids are finitely generated and, therefore, atomic with finitely many atoms. The \emph{embedding dimension} of $N$ is the cardinality of $\mathcal{A}(N)$. For an introduction to numerical monoids, see \cite{GR09}, and for some of their many applications, see~\cite{AG16}.

\smallskip
\subsection{Factorizations}

A multiplicative monoid $F$ is called \emph{free with basis}~$P \subset F$ if every element $x \in F$ can be written uniquely in the form
\[
	x = \prod_{p \in P} p^{\pval(x)},
\]
where $\pval(x) \in \nn_0$ and $\pval(x) > 0$ only for finitely many elements $p \in P$. By the fundamental theorem of arithmetic, the multiplicative monoid $\nn$ is free on $\pp$. In this case, we can extend $\pval$ to $\qq_{\ge 0}$ as follows. For $r \in \qq_{> 0}$ let $\pval(r) := \pval(\mathsf{n}(r)) - \pval(\mathsf{d}(r))$ and set $\pval(0) = \infty$. The maps $\pval$ (for $p \in \pp$), usually called \emph{$p$-adic valuations}, are useful tools to study Puiseux monoids.

Let $M$ be a reduced monoid. The \emph{factorization monoid} of $M$, denoted by $\mathsf{Z}(M)$, is the free (commutative) monoid on $\mathcal{A}(M)$. The elements of $\mathsf{Z}(M)$ are called \emph{factorizations}. If $z = a_1 \cdots a_n \in \mathsf{Z}(M)$, where $a_1, \dots, a_n \in \mathcal{A}(M)$, then $|z| := n$ is the \emph{length} of~$z$. The unique monoid homomorphism $\pi \colon \mathsf{Z}(M) \to M$ satisfying $\pi(a) = a$ for all $a \in \mathcal{A}(M)$ is the \emph{factorization homomorphism} of~$M$. For each $x \in M$,
\[
	\mathsf{Z}(x) := \pi^{-1}(x) \subseteq \mathsf{Z}(M) \quad \text{and} \quad \mathsf{L}(x) := \{|z| : z \in \mathsf{Z}(x)\}
\]
are the \emph{set of factorizations} and the \emph{set of lengths} of $x$, respectively. Factorization invariants stemming from the sets of lengths have been studied for several classes of atomic monoids and domains; see, for instance, \cite{CGGR01,CGTV16,CGP14,CHM06}. In particular, the sets of lengths of numerical monoids have been studied in~\cite{ACHP07,CDHK10,GS18}. In~\cite{GS18} the sets of lengths of numerical monoids were studied using techniques involving Puiseux monoids. An overview of sets of lengths and the role they play in factorization theory can be found in~\cite{aG16}. 

By restricting the size of the sets of factorizations/lengths, one obtains subclasses of atomic monoids that have been systematically studied by many authors. We say that a reduced atomic monoid $M$ is
\begin{enumerate}
	\item a \emph{UFM} (or a \emph{factorial monoid}) if $|\mathsf{Z}(x)| = 1$ for all $x \in M$,
	\vspace{2pt}
	
	\item an \emph{HFM} (or a \emph{half-factorial monoid}) if $|\mathsf{L}(x)| = 1$ for all $x \in M$,
	\vspace{2pt}
	
	\item an \emph{FFM} (or a \emph{finite factorization monoid}) if $|\mathsf{Z}(x)| < \infty$ for all $x \in M$, and
	\vspace{2pt}

	\item a \emph{BFM} (or a \emph{bounded factorization monoid}) if $|\mathsf{L}(x)| < \infty$ for all $x \in M$.
\end{enumerate}

\medskip
\section{Closures and Conductor}
\label{sec:closure nad conductor}

In this section we study some algebraic aspects of Puiseux monoids.

\begin{definition}
	A \emph{Puiseux monoid} is an additive submonoid of $\qq_{\ge 0}$.
\end{definition}

Clearly, Puiseux monoids are natural generalizations of numerical monoids. As with numerical monoids, Puiseux monoids are reduced. However, as we shall see later, Puiseux monoids are not, in general, finitely generated or atomic. Puiseux monoids account up to isomorphism for all submonoids of $(\qq,+)$ which are not groups. The following proposition sheds some light upon this observation. 

\begin{prop}\label{Gilmer}
		Let $M$ be an additive submonoid of $\mathbb{Q}$ that is not a group. Then either $M$ or $-M$ is a Puiseux monoid.
\end{prop}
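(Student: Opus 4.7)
The plan is to prove the contrapositive: if $M$ is an additive submonoid of $\mathbb{Q}$ that contains both a positive and a negative element, then $M$ is a group. Equivalently, we show that $M \subseteq \mathbb{Q}_{\ge 0}$ or $M \subseteq \mathbb{Q}_{\le 0}$, from which the conclusion is immediate (the former case makes $M$ a Puiseux monoid, while the latter makes $-M$ one).

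The key arithmetic observation is that whenever $\alpha, \beta \in M$ satisfy $\alpha > 0 > \beta$, there exist $k, \ell \in \mathbb{N}$ with $k\alpha + \ell\beta = 0$. Indeed, writing $\alpha = p/q$ and $-\beta = r/s$ as positive rationals in any form, the choice $k = rq$ and $\ell = ps$ yields $k\alpha = \ell(-\beta)$, hence $k\alpha + \ell \beta = 0 \in M$. I would state this as a short preliminary observation and then reuse it.

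Now I would fix $a, b \in M$ with $a > 0$ and $b < 0$, and prove that $-x \in M$ for every $x \in M$. The case $x = 0$ is trivial. If $x > 0$, apply the observation to the pair $(x, b)$ to obtain $n, m \in \mathbb{N}$ with $nx + mb = 0$; since $n \ge 1$, the element
\[
	-x \;=\; (n-1)x + mb
\]
lies in $M$ because it is a non-negative integer combination of elements of $M$. If $x < 0$, apply the observation symmetrically to the pair $(a, x)$ to obtain $n', m' \in \mathbb{N}$ with $n'a + m'x = 0$, so that $-x = n'a + (m'-1)x \in M$. In all cases, $-x \in M$, so $M$ is a subgroup of $\mathbb{Q}$, contradicting the hypothesis.

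I do not anticipate a real obstacle here; the only point that requires a moment of care is making sure the coefficient $(n-1)$ in the first case and $(m'-1)$ in the second case are non-negative, which is exactly why one pairs a given positive $x$ with the negative witness $b$ (rather than the other way around), and dually for negative $x$. Once the contrapositive is established, observing that $M \subseteq \mathbb{Q}_{\ge 0}$ makes $M$ a Puiseux monoid and $M \subseteq \mathbb{Q}_{\le 0}$ makes $-M$ one finishes the argument.
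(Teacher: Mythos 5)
Your proof is correct and follows essentially the same route as the paper's: both show that if $M$ contains elements of both signs, then $-x$ can be written as a nonnegative integer combination $(n-1)x + mb$ of $x$ and an element of the opposite sign, so every element is invertible and $M$ is a group. The only cosmetic difference is how the positive integer coefficients are produced (you exhibit $k = rq$, $\ell = ps$ directly, while the paper chooses a multiplier divisible by $\mathsf{d}(x)\mathsf{n}(y)$), and your bookkeeping of the $n-1 \ge 0$ and $m'-1 \ge 0$ cases is sound.
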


\begin{proof}
	Let $M$ be as in the hypothesis and suppose, by way of contradiction, that neither~$M$ nor $-M$ is a Puiseux monoid. Fix $x \in M^\bullet$. Since neither~$M \subseteq \qq_{\ge 0}$ nor $-M \subseteq \qq_{\ge 0}$, there exists $y \in M$ such that $xy < 0$. Choose $\alpha \in \nn$ such that $ \mathsf{d}(x) \mathsf{n}(y) \mid (-1-\alpha)$.  Set
	\[
		\beta := \frac{(-1-\alpha) \mathsf{n}(x) \mathsf{d}(y)}{\mathsf{d}(x) \mathsf{n}(y)}= \frac{(-1-\alpha)x}{y}. 
	\]
	By construction, $\beta\in \mathbb{Z}$. Because $(-1-\alpha) < 0$ and $x/y < 0$, the integer $\beta$ is positive. Thus,
	\[
		-x = \alpha x + (-1-\alpha)x = \alpha x +\beta y \in M.
	\]
	As each element of $M$ is invertible, $M$ is a group. However, this contradicts the hypothesis of the proposition.
\end{proof}

\smallskip
\subsection{The Difference Group}

A monoid $M$ is \emph{torsion-free} if for all $x,y \in M$ and $n \in \nn$, the equality $nx = ny$ implies that $x=y$. Clearly, $M$ is a torsion-free monoid if and only if $\gp(M)$ is a torsion-free group. Each Puiseux monoid $M$ is obviously torsion-free and, therefore, $\gp(M)$ is a torsion-free group. Moreover, for a Puiseux monoid $M$, one can take the difference group $\gp(M)$ to be a subgroup of $(\qq,+)$, specifically,
\begin{equation} \label{eq:difference group of a PM}
	\gp(M) = \{ x-y \mid x,y \in M \}.
\end{equation}

Puiseux monoids can be characterized as follows.

\begin{prop} \label{prop:characterization property of Puiseux monoids}
	For a nontrivial monoid $M$ the following statements are equivalent.
	\begin{enumerate}
		\item $M$ is a rank-$1$ torsion-free monoid that is not a group.
		\vspace{2pt}
		
		\item $M$ is isomorphic to a Puiseux monoid.
	\end{enumerate}
\end{prop}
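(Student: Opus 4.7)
The plan is to prove the equivalence by handling each direction separately, with Proposition~\ref{Gilmer} doing most of the heavy lifting in one direction.

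For (2) $\Rightarrow$ (1), suppose $M$ is isomorphic to a Puiseux monoid $M'$. Then $M'$ embeds in $\qq$, so $M'$ (and hence $M$) is torsion-free. Using the description of the difference group in~\eqref{eq:difference group of a PM}, $\gp(M) \cong \gp(M')$ is a nonzero subgroup of $(\qq,+)$, and every nonzero subgroup of $\qq$ has rank $1$ as a $\zz$-module, so $M$ has rank $1$. Finally, since Puiseux monoids are reduced and $M$ is nontrivial, $M$ contains a nonzero element $x$, and because $-x \notin \qq_{\ge 0}^\bullet \cup \{0\}$ cannot also lie in $M'$, we see $M$ is not a group.

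For (1) $\Rightarrow$ (2), assume $M$ is a rank-$1$ torsion-free monoid that is not a group. The key input is the classical fact that any torsion-free abelian group of rank $1$ embeds into $(\qq,+)$: indeed, fixing a nonzero $g \in \gp(M)$, every element $h \in \gp(M)$ satisfies $mh = ng$ for some $m \in \nn$ and $n \in \zz$ (by rank $1$), and the assignment $h \mapsto n/m$ is a well-defined injective homomorphism by torsion-freeness. Composing the canonical embedding $M \hookrightarrow \gp(M)$ with this embedding $\gp(M) \hookrightarrow \qq$ realizes $M$ as isomorphic to an additive submonoid $M'$ of $\qq$. Since $M$ is not a group, neither is $M'$, so Proposition~\ref{Gilmer} yields that either $M'$ or $-M'$ is a Puiseux monoid. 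In either case, $M$ is isomorphic to a Puiseux monoid (via the identity, or post-composed with negation).

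The only potential obstacle is the embedding of a rank-$1$ torsion-free abelian group into $\qq$, but this is standard and the construction sketched above gives it directly; once that is in hand, Proposition~\ref{Gilmer} immediately finishes the argument.
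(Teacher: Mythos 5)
Your proof is correct and follows essentially the same route as the paper: reduce (1) $\Rightarrow$ (2) to embedding the rank-$1$ torsion-free group $\gp(M)$ into $(\qq,+)$ and then invoke Proposition~\ref{Gilmer}, and for (2) $\Rightarrow$ (1) read off rank and torsion-freeness from $\gp(M) \subseteq \qq$ and non-groupness from reducedness. The only difference is that you supply a direct construction of the embedding $\gp(M) \hookrightarrow \qq$ where the paper cites Fuchs; your construction is sound, so this is just a self-contained variant of the same argument.
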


\begin{proof}
	To argue (1) $\Rightarrow$ (2), first note that $\gp(M)$ is a rank-$1$ torsion-free abelian group. Therefore it follows from \cite[Section 85]{lF73} that $\gp(M)$ is isomorphic to a subgroup of $(\qq,+)$, and one can assume that $M$ is a submonoid of $(\qq,+)$. Since $M$ is not a group, Proposition \ref{Gilmer} ensures that either $M \subseteq \qq_{\le 0}$ or $M \subseteq \qq_{\ge 0}$. So $M$ is isomorphic to a Puiseux monoid. To verify (2) $\Rightarrow$ (1), let us assume that $M \subseteq \gp(M) \subseteq \qq$. As $\gp(M)$ is a subgroup of $(\qq,+)$, it is a rank-$1$ torsion-free abelian group. This implies that~$M$ is a rank-$1$ torsion-free monoid. Since $M$ is nontrivial and reduced, it cannot be a group, which completes our proof.
\end{proof}

While a Puiseux monoid is countable, the class of Puiseux monoids is uncountable, as the next proposition illustrates.

\begin{prop} \label{prop:there are uncountably many PMs}
	There are uncountably many non-isomorphic Puiseux monoids.
\end{prop}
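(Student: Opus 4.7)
The plan is to exhibit an uncountable family $\{M_S\}_{S \subseteq \pp}$ of Puiseux monoids indexed by subsets of the primes and to prove that $M_S \not\cong M_T$ whenever $S \neq T$; since $\pp$ is countably infinite, this yields $2^{\aleph_0}$ pairwise non-isomorphic Puiseux monoids. The family I would use is
\[
	M_S := \langle 1/p^n \mid p \in S, \ n \in \nn \rangle \subseteq \qq_{\ge 0},
\]
which is manifestly a Puiseux monoid for every $S \subseteq \pp$.

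The separating invariant is the difference group. I would first verify, using B\'ezout's identity to clear simultaneous denominators, that $\gp(M_S) = \zz[1/p : p \in S]$ as a subgroup of $(\qq,+)$: the inclusion $\subseteq$ is immediate from the generators, and the reverse follows because an integer linear combination of $1/p_1,\dots,1/p_k$ produces $1/(p_1 \cdots p_k)$, and iterating handles arbitrary products of $p$-power denominators with $p \in S$. By \eqref{eq:difference group of a PM}, every monoid isomorphism $M_S \to M_T$ extends canonically to a group isomorphism $\Phi \colon \gp(M_S) \to \gp(M_T)$, so it suffices to show that no such $\Phi$ exists when $S \neq T$.

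Next, I would observe that any group isomorphism between nonzero subgroups of $(\qq,+)$ is multiplication by a fixed $r \in \qq^\times$: fix $a \in \gp(M_S)^\bullet$ and set $r := \Phi(a)/a$; for each $x \in \gp(M_S)$ one can pick $n \in \nn$ and $m \in \zz$ with $nx = ma$, whence $n\Phi(x) = m\Phi(a) = mra$ and therefore $\Phi(x) = rx$. Now suppose for contradiction that $S \neq T$, and (swapping the roles of $S$ and $T$ if necessary) pick $p \in S \setminus T$. For every $n \in \nn$, the element $1/p^n$ lies in $\gp(M_S)$, so $r/p^n = \Phi(1/p^n) \in \gp(M_T) = \zz[1/q : q \in T]$. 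Because $p \notin T$, the valuation $\pval$ is non-negative on $\gp(M_T)$, which forces $\pval(r) - n = \pval(r/p^n) \ge 0$ for every $n \in \nn$, an impossibility. The only step demanding genuine care is this rigidity observation that isomorphisms between rank-$1$ subgroups of $\qq$ are scalar multiplications by rationals; once it is in hand, the $p$-adic valuations recalled in Section~\ref{sec:prelim} cleanly separate the family $\{M_S\}_{S \subseteq \pp}$ and the proposition follows.
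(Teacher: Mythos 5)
Your proof is correct, and it reaches the conclusion by a more self-contained route than the paper. Both arguments hinge on the same reduction: a monoid isomorphism of Puiseux monoids induces an isomorphism of the difference groups, so it suffices to realize uncountably many pairwise non-isomorphic subgroups of $(\qq,+)$ as difference groups of Puiseux monoids. The paper uses the map $G \mapsto G \cap \qq_{\ge 0}$ defined on all subgroups of $\qq$ and then outsources the uncountability to the classification of rank-$1$ torsion-free abelian groups (\cite[Corollary 85.2]{lF73}); you instead exhibit the explicit family $\gp(M_S) = \zz[1/p : p \in S]$ and separate its members by hand, in effect re-proving the special case of that classification which you actually need. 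Your rigidity lemma (every homomorphism between nonzero subgroups of $\qq$ is multiplication by a fixed rational) and the subsequent $p$-adic valuation argument are both sound. The only cosmetic point is $S = \emptyset$, where $M_S$ is trivial and ``fix $a \in \gp(M_S)^\bullet$'' is vacuous; this costs nothing, since the trivial monoid is not isomorphic to any $M_S$ with $S \neq \emptyset$ and there remain $2^{\aleph_0}$ nonempty subsets of $\pp$. What the paper's approach buys is brevity and the stronger observation that every subgroup of $\qq$ arises as the difference group of some Puiseux monoid; what yours buys is independence from the citation and a concrete uncountable family one can point to.
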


\begin{proof}
	Consider the map $G \mapsto M_G := G \cap \qq_{\ge 0}$ sending each subgroup $G$ of $(\qq,+)$ to a Puiseux monoid. Clearly, $\gp(M_G) \cong G$. In addition, for all subgroups $G$ and~$G'$ of $(\qq,+)$, each monoid isomorphism between $M_G$ and $M_{G'}$ naturally extends to a group isomorphism between $G$ and $G'$. Hence our assignment sends non-isomorphic groups to non-isomorphic monoids. It follows from \cite[Corollary 85.2]{lF73} that there are uncountably many non-isomorphic rank-$1$ torsion-free abelian groups. As a result, there are uncountably many non-isomorphic Puiseux monoids.
\end{proof}

\smallskip
\subsection{Root and Complete Integral Closures}

Given a monoid $M$ with difference group $\gp(M)$, the sets
\begin{itemize}

	\item $\widetilde{M} := \big\{ x \in \gp(M) \mid nx \in M \ \text{for some} \ n \in \nn \big\}$ and

	\item $\widehat{M} := \big\{ x \in \gp(M) \mid \text{ there exists } c \in M \text{ such that } c + nx \in M \text{ for all } n \in \nn \big\}$
\end{itemize}
are called the \emph{root closure} and \emph{complete integral closure} of $M$, respectively. It is not hard to verify that $M \subseteq \widetilde{M} \subseteq \widehat{M} \subseteq \gp(M)$ for any monoid $M$. For Puiseux monoids, we give a complete description of these sets.

\begin{prop} \label{prop:closure of a PM}
	Let $M$ be a Puiseux monoid, and let $n = \gcd( \mathsf{n}(M^\bullet))$. Then the following statements hold.
	\begin{enumerate}
		\item \label{eq:equality of closures of a PM} $\widetilde{M} = \widehat{M} = \emph{\gp}(M) \cap \qq_{\ge 0}$.
		\vspace{2pt}
		\item $\widetilde{M} = n \langle 1/d \mid d \in \mathsf{d}(M^\bullet) \rangle.$
	\end{enumerate}
\end{prop}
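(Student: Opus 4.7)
The plan is to prove (1) first and then bootstrap it in the proof of (2).

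For (1), the chain $\widetilde{M} \subseteq \widehat{M} \subseteq \gp(M)$ is already noted, so only two further inclusions remain. To see $\widehat{M} \subseteq \qq_{\ge 0}$: if some $x \in \widehat{M}$ were negative, the witness $c \in M$ from the definition would force $c + mx < 0$ for all sufficiently large $m$, contradicting $c + mx \in M$. For $\gp(M) \cap \qq_{\ge 0} \subseteq \widetilde{M}$: given a nonzero $x$ in the left side, fix any $y \in M^\bullet$; then $x/y = a/b$ for some $a, b \in \nn$, and $bx = ay \in M$ witnesses $x \in \widetilde{M}$.

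For (2), let $D := \mathsf{d}(M^\bullet)$ and $N := \langle 1/d \mid d \in D \rangle$. To establish $nN \subseteq \widetilde{M}$, by (1) and the fact that $\widetilde{M}$ is a monoid it suffices to verify $n/d \in \gp(M)$ for each $d \in D$. Writing each $q \in M^\bullet$ as $n k_q / d_q$ with $\gcd(k_q, d_q) = 1$, the set $\{k_q\}$ has gcd $1$, so Bezout together with the identity $n k_q = \mathsf{n}(q) = d_q q \in M$ expresses $n$ as a $\zz$-combination of elements of $M$, placing $n \in \gp(M)$. A second Bezout on the coprime pair $(k_q, d)$, for a chosen $q$ with $\mathsf{d}(q) = d$, then gives $n/d = aq + bn \in \gp(M)$.

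For the reverse inclusion $\widetilde{M} \subseteq nN$, any $x \in \widetilde{M} \subseteq \gp(M)$ can be written as a $\zz$-combination $\sum \alpha_i q_i$ of atoms in $M^\bullet$; substituting $q_i = n k_i / d_i$ factors this as $x = n y$ with $y = \sum \alpha_i (k_i/d_i) \in \gp(N)$ and $y = x/n \ge 0$. By (1) applied to the Puiseux monoid $N$, $y \in \widetilde{N}$, so the direction reduces to showing $N$ is root-closed; I plan to deduce this from the fact that $D$ is closed under pairwise (hence, by induction, finite) lcms. For lcm-closure, take $d_1, d_2 \in D$ with $d_i = \mathsf{d}(q_i)$: the cyclic subgroup $\zz q_1 + \zz q_2 \subseteq \qq$ has a generator $\eta$ with $\pval(\eta) = \min(\pval(q_1), \pval(q_2))$ at every prime, and a prime-by-prime case analysis (using $\gcd(\mathsf{n}(q_i), d_i) = 1$) shows $\mathsf{d}(\eta) = \text{lcm}(d_1, d_2) =: L$. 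Since $q_1/\eta$ and $q_2/\eta$ are coprime positive integers, the numerical-semigroup Frobenius bound yields $k\eta \in \langle q_1, q_2 \rangle \subseteq M$ for every sufficiently large $k \in \nn$; choosing such a $k$ with $k \equiv 1 \pmod{L}$ makes $\mathsf{d}(k\eta) = L$, so $L \in D$. Once lcm-closure is in hand, any $z \in \gp(N) \cap \qq_{\ge 0}$ can be written as $m/L$ with $L \in D$ a common-denominator lcm and $m \in \nn_0$, so $z = m \cdot (1/L) \in N$. The main obstacle is the valuation-by-valuation identification $\mathsf{d}(\eta) = L$, on which this entire direction rests.
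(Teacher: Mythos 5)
Your proposal is correct, and its technical heart is the same as the paper's: both arguments rest on showing that $\mathsf{d}(M^\bullet)$ is closed under least common multiples, and your valuation-theoretic version of that step (the generator $\eta$ of $\zz q_1+\zz q_2$ satisfies $\pval(\eta)=\min(\pval(q_1),\pval(q_2))$, so $\mathsf{d}(\eta)=\mathrm{lcm}(d_1,d_2)$, and then $k\eta\in\langle q_1,q_2\rangle$ for large $k\equiv 1 \pmod{L}$) is exactly the paper's computation with $k=N\ell+1$ and $\eta=\gcd(\mathsf{n}(q_1),\mathsf{n}(q_2))/\ell$, just organized prime by prime. The packaging differs in three ways, each slightly to your advantage. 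In~(1), your inclusion $\gp(M)\cap\qq_{\ge 0}\subseteq\widetilde{M}$ via $bx=ay$ for a fixed $y\in M^\bullet$ is cleaner than the paper's claim that $n\,\mathsf{d}(x)x\in M$, which quietly uses facts about numerators that are only developed in~(2). In the forward inclusion of~(2), you invoke~(1) so that signed B\'ezout coefficients suffice, whereas the paper must arrange nonnegative coefficients. In the reverse inclusion, you route through the auxiliary monoid $N=\langle 1/d\mid d\in\mathsf{d}(M^\bullet)\rangle$ and its root-closedness, which lets you skip the paper's separate observation that $\mathsf{d}(M^\bullet)$ is closed under divisors (your representative $m/L$ need not be in lowest terms), at the modest cost of needing lcm-closure for arbitrary finite families (handled by induction, as you say). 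One small slip: you describe an element of $\gp(M)$ as a $\zz$-combination of ``atoms'' of $M^\bullet$; a Puiseux monoid need not be atomic, so this should read ``elements of $M^\bullet$'' (indeed every element of $\gp(M)$ is a difference of just two such elements), after which everything goes through.
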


\begin{proof}
	It is easy to verify that $\widehat{M} \subseteq \gp(M) \cap \qq_{\ge 0}$. So $\widetilde{M} \subseteq \widehat{M} \subseteq \gp(M) \cap \qq_{\ge 0}$. On the other hand, if $x \in \gp(M) \cap \qq_{>0}$, then it readily follows that $n \mathsf{d}(x)x \in M$. Therefore $\gp(M) \cap \qq_{\ge 0} \subseteq \widetilde{M}$. As a result,~(1) follows. 
	
	To prove~(2), fix $d \in \mathsf{d}(M^\bullet)$, and then take $k \in \nn$ such that $k/d \in M$. As $\gcd(k,nd) = n$, there exist $\alpha,\beta \in \nn_0$ satisfying that $n = \alpha k - \beta nd$. Then
	\[
		\frac{n}{d} = \alpha \frac{k}{d} - \beta n \in \gp(M).
	\]
	On the other hand, since $n \mid k$ and $(k/n) \frac{n}{d} = k/d \in M$, one finds that $n/d \in \widetilde{M}$. Thus, $n \langle 1/d \mid d \in \mathsf{d}(M^\bullet) \rangle \subseteq \widetilde{M}$. For the reverse inclusion we first verify that $\mathsf{d}(M^\bullet)$ is closed under taking positive divisors and least common multiples. Clearly, $\mathsf{d}(M^\bullet)$ is closed under taking positive divisors. To see that $\mathsf{d}(M^\bullet)$ is closed under taking least common multiples, take $q_1, q_2 \in M^\bullet$ and then set $d = \gcd( \mathsf{d}(q_1), \mathsf{d}(q_2))$ and $\ell = \text{lcm}(\mathsf{d}(q_1), \mathsf{d}(q_2))$. As $\gcd(\mathsf{n}(q_1), \mathsf{n}(q_2))$ is the greatest common divisor of $\mathsf{n}(q_1) \mathsf{d}(q_2)/d$ and $\mathsf{n}(q_2) \mathsf{d}(q_1)/d$, there exist $N, c_1, c_2 \in \nn_0$ such that
	\begin{align*}
		\frac{(N\ell + 1) \gcd(\mathsf{n}(q_1), \mathsf{n}(q_2))}{\ell}
		&= \frac{1}{\ell} \bigg( c_1 \frac{\mathsf{n}(q_1) \, \mathsf{d}(q_2)}d + c_2 \frac{\mathsf{n}(q_2) \, \mathsf{d}(q_1)}d \bigg)\\ 
			&= c_1 q_1 +  c_2  q_2 \in M.
	\end{align*}
	Since $\ell$ and $(N\ell + 1) \gcd(\mathsf{n}(q_1), \mathsf{n}(q_2))$ are relatively prime, $\ell \in \mathsf{d}(M^\bullet)$. Thus, $\mathsf{d}(M^\bullet)$ is closed under taking least common multiples, as desired.
	
	One can easily see that $M \subseteq n\langle 1/d \mid d \in \mathsf{d}(M^\bullet) \rangle$. Take $q \in \widetilde{M}\setminus M \subseteq \gp(M)$, and then take $q_1, q_2 \in M^\bullet$ such that $q = q_2 - q_1$. Clearly, $\mathsf{d}(q) \mid \text{lcm}(\mathsf{d}(q_1), \mathsf{d}(q_2))$. As $\mathsf{d}(M^\bullet)$ is closed under taking positive divisors and least common multiples, one finds that $\mathsf{d}(q) \in \mathsf{d}(M^\bullet)$. Hence $q = (\mathsf{n}(q)/n) \frac{n}{\mathsf{d}(q)} \in n \langle 1/d \mid d \in \mathsf{d}(M^\bullet) \rangle$, which implies the reverse inclusion.
\end{proof}

\begin{example} \label{ex:root closure of PMs} \hfill
	\begin{enumerate}
		\item For each $r \in \qq_{> 0}$, it is clear that $M = \{0\} \cup \qq_{\ge r}$ is a Puiseux monoid. Observe that $\gcd (\mathsf{n}(M^\bullet)) = 1$ and $\mathsf{d}(M^\bullet) = \nn$. Now one can use Proposition~\ref{prop:closure of a PM} to obtain that
		\[
			\widetilde{M} = \bigg\langle \frac{1}{d} \ \bigg{|} \ d \in \mathsf{d}(M^\bullet) \bigg\rangle = \bigg\langle \frac{1}{d} \ \bigg{|} \ d \in \nn \bigg\rangle = \qq_{\ge 0}.
		\]
		\item Now suppose that $p$ and $q$ are two distinct prime numbers, and consider the Puiseux monoid $M = \langle (p/q)^n \mid n \in \nn_0 \rangle$. Because $1 \in M$, it follows that $\gcd(\mathsf{n}(M^\bullet)) = 1$. In addition, it is clear that $\mathsf{d}(M^\bullet) = \{q^n \mid n \in \nn_0\}$. Now Proposition~\ref{prop:closure of a PM} ensures that
		\[
			\widetilde{M} = \bigg\langle \frac{1}{q^n} \ \bigg{|} \ n \in \nn_0 \bigg\rangle,
		\]
		which is the nonnegative cone of the localization of $\zz$ at the multiplicative set $\{q^n \mid n \in \nn_0\}$. Notice that the monoid $M$ is closed under multiplication and, therefore, it is a cyclic rational semiring; we will discuss the atomic structure of cyclic rational semirings in Section~\ref{sec:atomicity}.
	\end{enumerate}
\end{example}

A monoid $M$ is said to be \emph{root-closed} provided that $\widetilde{M} = M$. In addition, $M$ is called a \emph{Pr\"ufer monoid} if $M$ is the union of an ascending sequence of cyclic submonoids. Root-closed Puiseux monoids can be characterized in the following ways.

\begin{cor}
	For a Puiseux monoid $M$, the following statements are equivalent.
	\begin{enumerate}
		\item $M$ is root-closed.
		\vspace{2pt}
		
		\item $\emph{gp}(M) = M \cup -M$.
		\vspace{2pt}
		
		\item $M$ is a Pr\"ufer monoid.
	\end{enumerate}
\end{cor}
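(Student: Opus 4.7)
The plan is to establish the three equivalences by leveraging Proposition~\ref{prop:closure of a PM}, in particular the explicit description of $\widetilde{M}$ and the lcm-closure of $\mathsf{d}(M^\bullet)$ that is proved within that proposition. I will argue $(1) \Leftrightarrow (2)$ directly from part~(1), then dispatch $(3) \Rightarrow (1)$ by unpacking the definition of a Pr\"ufer monoid, and finally derive $(1) \Rightarrow (3)$ by constructing an explicit ascending chain of cyclic submonoids whose union is $M$.

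For $(1) \Leftrightarrow (2)$, note that by Proposition~\ref{prop:closure of a PM}(1), $M$ is root-closed if and only if $M = \gp(M) \cap \qq_{\ge 0}$. Assuming this, negation gives $-M = \gp(M) \cap \qq_{\le 0}$, and since $\gp(M) \subseteq \qq = \qq_{\ge 0} \cup \qq_{\le 0}$, we obtain $\gp(M) = M \cup -M$. Conversely, if $\gp(M) = M \cup -M$, then any element of $\gp(M) \cap \qq_{\ge 0}$ that happens to lie in $-M$ must equal $0 \in M$, since $-M \cap \qq_{\ge 0} = \{0\}$; hence $\gp(M) \cap \qq_{\ge 0} \subseteq M$, which yields root-closedness. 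For $(3) \Rightarrow (1)$, if $M = \bigcup_{k \in \nn} \langle q_k \rangle$ is an ascending union of cyclic submonoids, then $\gp(M) = \bigcup_k q_k \zz$; any element $x = m q_k$ with $x \ge 0$ forces $m \ge 0$ (as $q_k \ge 0$), so $x \in \langle q_k \rangle \subseteq M$, and Proposition~\ref{prop:closure of a PM}(1) gives $M = \widetilde{M}$.

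The main step, and where the genuine work sits, is $(1) \Rightarrow (3)$. Assuming $M$ is nontrivial and root-closed, Proposition~\ref{prop:closure of a PM}(2) gives $M = n \langle 1/d : d \in \mathsf{d}(M^\bullet) \rangle$ with $n = \gcd(\mathsf{n}(M^\bullet))$, and its proof already shows that $\mathsf{d}(M^\bullet)$ is closed under least common multiples. I will exploit this by enumerating $\mathsf{d}(M^\bullet) = \{d_1, d_2, \ldots\}$, defining $D_k := \mathrm{lcm}(d_1, \ldots, d_k) \in \mathsf{d}(M^\bullet)$, and setting $q_k := n/D_k \in M$. Since $D_k \mid D_{k+1}$, we have $q_k = (D_{k+1}/D_k)\, q_{k+1} \in \langle q_{k+1} \rangle$, so $\langle q_k \rangle \subseteq \langle q_{k+1} \rangle$. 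For any generator $n/d$ of $M$, we have $d \mid D_k$ for all sufficiently large $k$, so $n/d = (D_k/d)\, q_k \in \langle q_k \rangle$, yielding $M = \bigcup_k \langle q_k \rangle$. The trivial case $M = \{0\} = \langle 0 \rangle$ is handled separately. The main conceptual obstacle is recognizing that the lcm-closure of $\mathsf{d}(M^\bullet)$ is precisely what allows the directed family $\{\langle n/d \rangle : d \in \mathsf{d}(M^\bullet)\}$ to be refined into a totally ordered ascending chain, but this closure is already available from the preceding proof.
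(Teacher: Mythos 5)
Your proof is correct. For the equivalence of (1) and (2) you do essentially what the paper does: both arguments are immediate from Proposition~\ref{prop:closure of a PM}(1), which identifies $\widetilde{M}$ with $\gp(M) \cap \qq_{\ge 0}$. The real divergence is in the equivalence of (1) and (3): the paper simply cites Theorem~13.5 of Gilmer's \emph{Commutative Semigroup Rings} and proves nothing, whereas you give a self-contained argument. Your direction (3)~$\Rightarrow$~(1) is a clean unpacking of the definition (an ascending union of cyclic submonoids has difference group $\bigcup_k q_k\zz$, and a nonnegative integer multiple of a nonnegative generator lies back in the cyclic submonoid), and your direction (1)~$\Rightarrow$~(3) correctly isolates the structural fact doing the work: the lcm-closure of $\mathsf{d}(M^\bullet)$, already established inside the proof of Proposition~\ref{prop:closure of a PM}, lets you replace the directed family $\{\langle n/d\rangle : d \in \mathsf{d}(M^\bullet)\}$ by the totally ordered chain $\langle n/D_k\rangle$ with $D_k = \operatorname{lcm}(d_1,\dots,d_k)$, and root-closedness guarantees each $n/D_k$ actually lies in $M$. (The only cosmetic gap is the case where $\mathsf{d}(M^\bullet)$ is finite, where the chain simply stabilizes at $\langle n/\operatorname{lcm}\mathsf{d}(M^\bullet)\rangle = M$; your construction already covers this if one allows the enumeration to repeat.) What your route buys is a proof readable entirely within the paper; what the paper's route buys is brevity and a pointer to the general semigroup-theoretic statement, which holds beyond Puiseux monoids.
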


\begin{proof}
	The equivalence (1) $\Leftrightarrow$ (2) follows from Proposition~\ref{prop:closure of a PM}, while (1) $\Leftrightarrow$ (3) follows from~\cite[Theorem~13.5]{rG84}.
\end{proof}

We now characterize finitely generated Puiseux monoids in terms of their root closures and their sets of denominators.

\begin{prop} \label{prop:fg PM are NM}
	For a Puiseux monoid $M$ the following statements are equivalent.
	\begin{enumerate}
		\item $\widetilde{M} \cong (\nn_0,+)$.
		\vspace{2pt}
		
		\item $M$ is finitely generated.
		\vspace{2pt}
		
		\item $\mathsf{d}(M^\bullet)$ is finite.
		\vspace{2pt}
		
		\item $M$ is isomorphic to a numerical monoid.
	\end{enumerate}
\end{prop}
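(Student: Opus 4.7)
The plan is to establish the cyclic chain of implications $(4) \Rightarrow (2) \Rightarrow (3) \Rightarrow (1) \Rightarrow (4)$. The first two links are essentially immediate from the definitions, while the last two rely on the explicit description of $\widetilde{M}$ given by Proposition~\ref{prop:closure of a PM}.

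For $(4) \Rightarrow (2)$ I would simply invoke the standard fact (recorded in the preliminaries) that every numerical monoid is finitely generated. For $(2) \Rightarrow (3)$, if $M = \langle q_1, \ldots, q_k \rangle$, then the denominator of any nonzero sum of elements of $M$ divides $\mathrm{lcm}(\mathsf{d}(q_1), \ldots, \mathsf{d}(q_k))$, so $\mathsf{d}(M^\bullet)$ is a finite subset of the divisors of that least common multiple.

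For $(3) \Rightarrow (1)$, set $n = \gcd(\mathsf{n}(M^\bullet))$ and $D = \mathrm{lcm}(\mathsf{d}(M^\bullet))$. The argument in the proof of Proposition~\ref{prop:closure of a PM} shows that $\mathsf{d}(M^\bullet)$ is closed under taking least common multiples, so the finiteness hypothesis forces $D \in \mathsf{d}(M^\bullet)$. Since every $d \in \mathsf{d}(M^\bullet)$ divides $D$, one has $1/d = (D/d) \cdot (1/D) \in \langle 1/D \rangle$, and hence Proposition~\ref{prop:closure of a PM}(2) yields
\[
\widetilde{M} \ = \ n \langle 1/d \mid d \in \mathsf{d}(M^\bullet) \rangle \ = \ \langle n/D \rangle \ \cong \ (\nn_0,+).
\]

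For $(1) \Rightarrow (4)$, which I expect to be the main hurdle, assume $\widetilde{M} \cong (\nn_0,+)$. Any submonoid of $\qq_{\ge 0}$ isomorphic to $\nn_0$ is cyclic, so $\widetilde{M} = \langle \alpha \rangle$ for some $\alpha \in \qq_{>0}$. Since $M \subseteq \widetilde{M} \subseteq \gp(M)$, passing to difference groups gives $\gp(M) = \gp(\widetilde{M}) = \alpha \zz$. Thus $M$ sits inside $\nn_0 \alpha$ with full difference group $\alpha \zz$, and the scaling map $k\alpha \mapsto k$ carries $M$ isomorphically onto a submonoid $N$ of $\nn_0$ with $\gp(N) = \zz$. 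This last identity forces $\gcd(N^\bullet) = 1$, and then the classical observation that a submonoid of $\nn_0$ whose nonzero elements have $\gcd$ equal to $1$ has finite complement shows that $|\nn_0 \setminus N| < \infty$, so $N$ is a numerical monoid.

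The subtle step is this final implication, in which one must extract the cofiniteness of $N$ in $\nn_0$ (a combinatorial condition) from the purely structural hypothesis $\widetilde{M} \cong \nn_0$. The leverage comes from the fact that the root closure preserves the difference group; after rescaling to $\nn_0$, this pins $\gp(N)$ down to all of $\zz$ and hence forces finite complement.
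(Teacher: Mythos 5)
Your proof is correct, but it traverses the cycle of implications in the opposite direction from the paper, which proves $(1)\Rightarrow(2)\Rightarrow(3)\Rightarrow(4)\Rightarrow(1)$. Both arguments hinge on Proposition~\ref{prop:closure of a PM}, but they use it at different links. The paper's workhorse move is to clear denominators: setting $\ell = \mathrm{lcm}\,\mathsf{d}(M^\bullet)$, the monoid $\ell M$ is a submonoid of $(\nn_0,+)$ isomorphic to $M$, and this single observation delivers both $(1)\Rightarrow(2)$ (after using Proposition~\ref{prop:closure of a PM} to see that $\widetilde{M}\cong\nn_0$ forces $\mathsf{d}(M^\bullet)$ finite) and $(3)\Rightarrow(4)$; the remaining link $(4)\Rightarrow(1)$ is then immediate from cofiniteness of a numerical monoid in $\nn_0$. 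You instead exploit the explicit formula $\widetilde{M}=n\langle 1/d \mid d\in\mathsf{d}(M^\bullet)\rangle$ together with closure of $\mathsf{d}(M^\bullet)$ under lcm to collapse $\widetilde{M}$ to the cyclic monoid $\langle n/D\rangle$ for $(3)\Rightarrow(1)$, and you make $(1)\Rightarrow(4)$ the hard step, passing through the fact that root closure preserves the difference group and the classical fact that a submonoid of $\nn_0$ whose nonzero elements have gcd $1$ is cofinite. That last fact is the same one the paper quietly relies on when it asserts that a submonoid of $\nn_0$ is isomorphic to a numerical monoid, so neither route is strictly more economical; yours has the small bonus of exhibiting the generator $n/D$ of $\widetilde{M}$ explicitly, while the paper's is slightly shorter because the denominator-clearing trick does double duty. (In both treatments one should tacitly exclude the trivial monoid, for which $(2)$--$(4)$ behave degenerately while $(1)$ fails.)
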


\begin{proof}
	To prove (1) $\Rightarrow (2)$, suppose that $\widetilde{M} \cong (\nn_0,+)$. Proposition~\ref{prop:closure of a PM} ensures that $\mathsf{d}(M^\bullet)$ is finite. Now if $\ell := \text{lcm} \, \mathsf{d}(M^\bullet)$, then $\ell M$ is a submonoid of $(\nn_0,+)$ that is isomorphic to $M$. Hence $M$ is finitely generated. To argue (2) $\Rightarrow$ (3), it suffices to notice that if $S$ is a finite generating set of $M$, then every element of $\mathsf{d}(M^\bullet)$ divides $\text{lcm} \, \mathsf{d}(S^\bullet)$. For (3) $\Rightarrow$ (4), let $\ell := \text{lcm} \, \mathsf{d}(M^\bullet)$. Then note that $\ell M$ is a submonoid of $(\nn_0,+)$ that is isomorphic to $M$. As a result, $M$ is isomorphic to a numerical monoid. To prove (4) $\Rightarrow$ (1), assume that $M$ is a numerical monoid and that $\gp(M)$ is a subgroup of $(\zz,+)$. By definition of $\widetilde{M}$, it follows that $\widetilde{M} \subseteq \nn_0$. On the other hand, the fact that $\nn_0 \setminus M$ is finite immediately implies that $\nn_0 \subseteq \widetilde{M}$. Consequently, $\widetilde{M} = (\nn_0,+)$. 
\end{proof}

\begin{cor} \label{cor:closure of a non-finitely generated PM is antimatter}
	Let $M$ be a Puiseux monoid.  Then $M$ is not finitely generated if and only if $\widetilde{M}$ is antimatter.
\end{cor}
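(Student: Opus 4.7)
The plan is to establish both implications, using Propositions~\ref{prop:closure of a PM} and~\ref{prop:fg PM are NM} as the main tools.

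For the easy direction, I would argue the contrapositive: if $M$ is finitely generated, then Proposition~\ref{prop:fg PM are NM} gives $\widetilde{M} \cong (\nn_0,+)$, which clearly has an atom (the generator $1$), so $\widetilde{M}$ is not antimatter.

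For the forward direction, assume $M$ is not finitely generated. By Proposition~\ref{prop:fg PM are NM} the set $\mathsf{d}(M^\bullet)$ is infinite, and from the proof of Proposition~\ref{prop:closure of a PM} we know that $\mathsf{d}(M^\bullet)$ is closed under taking positive divisors and least common multiples. Since $\mathsf{d}(M^\bullet)$ is infinite but closed under divisors, it must be unbounded: if it were bounded, it would be a subset of the finite set of positive divisors of its maximum. I want to show every nonzero element of $\widetilde{M}$ decomposes as a sum of two nonzero elements of $\widetilde{M}$. Fix $x \in \widetilde{M}^\bullet$. By Proposition~\ref{prop:closure of a PM}(2) one can write $x = n\bigl(\sum_{i=1}^k c_i/d_i\bigr)$ with $c_i \in \nn_0$ and $d_i \in \mathsf{d}(M^\bullet)$; clearing to a common denominator (which lies in $\mathsf{d}(M^\bullet)$ by closure under lcm) yields $x = na/d$ for some $a \in \nn$ and $d \in \mathsf{d}(M^\bullet)$, where $n = \gcd(\mathsf{n}(M^\bullet))$.

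The key step is to produce a proper multiple of $d$ inside $\mathsf{d}(M^\bullet)$. Because $\mathsf{d}(M^\bullet)$ is infinite while $d$ has only finitely many positive divisors, there exists $d'' \in \mathsf{d}(M^\bullet)$ with $d'' \nmid d$; then $d' := \mathrm{lcm}(d,d'') \in \mathsf{d}(M^\bullet)$ is a proper multiple of $d$, say $d' = kd$ with $k \ge 2$. Writing $x = nak/d'$ expresses $x$ as the sum of $ak \ge 2$ copies of $n/d'$, and $n/d' \in \widetilde{M}^\bullet$ by Proposition~\ref{prop:closure of a PM}(2). Hence $x$ is not an atom, and since $x$ was an arbitrary nonzero element of $\widetilde{M}$, we conclude that $\widetilde{M}$ is antimatter.

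The main obstacle is the last step: knowing that $\widetilde{M}$ contains arbitrarily fine ``denominators'' is what makes every element decomposable, and this relies precisely on the structural closure properties of $\mathsf{d}(M^\bullet)$ already extracted in the proof of Proposition~\ref{prop:closure of a PM}. Everything else is bookkeeping.
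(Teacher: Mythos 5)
Your proof is correct and follows essentially the same route as the paper: both hinge on the observation that $\mathsf{d}(M^\bullet)$ is infinite and closed under least common multiples, hence contains a proper multiple $d'$ of any given $d$, so that $n/d$ splits into $d'/d \ge 2$ copies of $n/d'$. The only cosmetic difference is that you decompose an arbitrary element of $\widetilde{M}$ directly, while the paper just shows no generator $n/d$ is an atom and uses that atoms must belong to every generating set.
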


\begin{proof}
	Suppose first that $M$ is not finitely generated. Set $n = \gcd(\mathsf{n}(M^\bullet))$. It follows from Proposition~\ref{prop:closure of a PM} that $\widetilde{M} = \langle n/d \mid d \in \mathsf{d}(M^\bullet) \rangle$. Fix $d \in \mathsf{d}(M^\bullet)$. Since $\mathsf{d}(M^\bullet)$ is an infinite set that is closed under taking least common multiples, there exists $d' \in \mathsf{d}(M^\bullet)$ such that $d'$ properly divides $d$. As a consequence, $n/d'$ properly divides $n/d$ in $\widetilde{M}$ and so $n/d \notin \mathcal{A}(\widetilde{M})$. As none of the elements in the generating set $\{n/d \mid d \in \mathsf{d}(M^\bullet)\}$ of $\widetilde{M}$ is an atom, $\widetilde{M}$ must be antimatter. The reverse implication is an immediate consequence of Proposition~\ref{prop:fg PM are NM}.
\end{proof}

One can use Corollary~\ref{cor:closure of a non-finitely generated PM is antimatter} to produce Puiseux monoids with no atoms.

\begin{example} \hfill
	\begin{enumerate}
		\item Take $r \in \qq_{> 0}$, and consider the Puiseux monoid $M = \{0\} \cup \qq_{\ge r}$. As $\mathsf{d}(M^\bullet)$ is not finite, it follows from Proposition~\ref{prop:fg PM are NM} that $M$ is not finitely generated. Then $\widetilde{M}$ is antimatter by Corollary~\ref{cor:closure of a non-finitely generated PM is antimatter}. We have already seen in Example~\ref{ex:root closure of PMs} that $\widetilde{M} = \qq_{\ge 0}$.
		\smallskip
		
		\item Consider the Puiseux monoid $M = \langle 1/p \mid p \in \pp \rangle$. As $0$ is a limit point of $M^\bullet$, the monoid $M$ is not finitely generated. Therefore it follows from Corollary~\ref{cor:closure of a non-finitely generated PM is antimatter} that $\widetilde{M}$ is an antimatter Puiseux monoid. Indeed, it is clear that $\gcd(\mathsf{n}(M^\bullet)) = 1$ and $\mathsf{d}(M^\bullet) = \{n \in \nn \mid n \ \text{is squarefree} \}$, so Proposition~\ref{prop:closure of a PM} guarantees that
		\[
			\widetilde{M} = \bigg\langle \frac{1}{n} \ \bigg{|} \ n \in \nn \ \text{is squarefree} \bigg\rangle.
		\]
		The atomicity of $M$ will be considered in Section~\ref{sec:atomicity}.
	\end{enumerate}
	
\end{example}

\smallskip
\subsection{Description of the Conductor}

Let $M$ be a monoid. The \emph{conductor} of $M$ is defined to be
\begin{equation} \label{eq:conductor}
	\mathfrak{c}(M) := \{ x \in \gp(M) \mid x + \widehat{M} \subseteq M \}.
\end{equation}
It is clear that $\mathfrak{c}(M)$ is a subsemigroup of the group $\gp(M)$. By Proposition~\ref{prop:closure of a PM}, the equality $\mathfrak{c}(M) = \{ x \in \gp(M) \mid x + \widetilde{M} \subseteq M \}$ holds when $M$ is a Puiseux monoid. This equality is more convenient for our purposes.

For a numerical monoid~$N$, the term ``conductor'' refers to the number $\mathfrak{f}(N) + 1$, where $\mathfrak{f}(N)$ is the Frobenius number of~$N$. As indicated in the following example, the conductor, as used in the context of numerical monoids, is the minimum of the conductor semigroup, as defined in~(\ref{eq:conductor}) and used in commutative semigroup theory.

\begin{example}
	Let $N$ be a numerical monoid, and let $\mathfrak{f}(N)$ be the Frobenius number of $N$. It follows from~(\ref{eq:difference group of a PM}) that $\gp(N) = \zz$. Therefore Proposition~\ref{prop:closure of a PM} guarantees that $\widetilde{N} = \nn_0$. For $n \in N$ with $n \ge \mathfrak{f}(N) + 1$, it is clear that $n + \widetilde{N} = n + \nn_0 \subseteq N$. On the other hand, for each $n \in \zz$ with $n \le \mathfrak{f}(N)$ the fact that $\mathfrak{f}(N) \in n + \widetilde{N}$ implies that $n + \widetilde{N} \nsubseteq N$. As a result,
	\begin{equation} \label{eq:Frobenius number and conductor}
		\mathfrak{c}(N) = \{ n \in \zz \mid n \ge \mathfrak{f}(N) + 1 \}.
	\end{equation}
	As the equality of sets~(\ref{eq:Frobenius number and conductor}) shows, the minimum of $\mathfrak{c}(N)$ is $\mathfrak{f}(N) + 1$, namely, the conductor number of $N$, as defined in the context of numerical monoids.

\end{example}

We conclude this subsection with a description of the conductor of a Puiseux monoid, which was recently established in~\cite{GGT19}. 

\begin{prop} \label{prop:conductor of a PM} 
	Let $M$ be a Puiseux monoid. Then the following statements hold.
	\begin{enumerate}
		\item If $M$ is root-closed, then $\mathfrak{c}(M) = \widetilde{M} = M$.

		\item If $M$ is not root-closed, then set $\sigma = \sup \, \widetilde{M} \setminus M$.
		\begin{enumerate} \label{part 2: conductor of PM}
			\item If $\sigma = \infty$, then $\mathfrak{c}(M) = \emptyset$.
			\item If $\sigma < \infty$, then $\mathfrak{c}(M) = M_{\ge \sigma}$.
		\end{enumerate}
	\end{enumerate}
\end{prop}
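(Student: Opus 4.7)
The plan is to leverage the reformulation $\mathfrak{c}(M) = \{x \in \gp(M) \mid x + \widetilde{M} \subseteq M\}$ (valid for Puiseux monoids by the discussion preceding the proposition) together with the identification $\widetilde{M} = \gp(M) \cap \qq_{\ge 0}$ from Proposition \ref{prop:closure of a PM}. Since $0 \in \widetilde{M}$, any $x \in \mathfrak{c}(M)$ automatically satisfies $x = x+0 \in M$; I will use this observation repeatedly.

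For part (1), when $M$ is root-closed we have $\widetilde{M} = M$, so the defining condition $x + \widetilde{M} \subseteq M$ reduces to $x + M \subseteq M$. The inclusion $M \subseteq \mathfrak{c}(M)$ is immediate from closure, while the reverse inclusion follows from the observation above, yielding $\mathfrak{c}(M) = M = \widetilde{M}$.

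For part (2a), I will argue by contradiction. Assume $x \in \mathfrak{c}(M)$; then $x \in M$ and so $x \ge 0$. Because $\sigma = \infty$, one can choose $y \in \widetilde{M} \setminus M$ with $y > x$. Then $y - x$ is a positive rational lying in $\gp(M)$, hence in $\widetilde{M}$ by Proposition \ref{prop:closure of a PM}(1), so $y = x + (y-x) \in x + \widetilde{M} \subseteq M$, contradicting $y \notin M$. Thus $\mathfrak{c}(M) = \emptyset$.

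For part (2b), I will prove both inclusions. For $\mathfrak{c}(M) \subseteq M_{\ge \sigma}$, take $x \in \mathfrak{c}(M)$; we already have $x \in M$, so it suffices to rule out $x < \sigma$, and the very same subtraction trick as in (2a) applied to some $y \in \widetilde{M} \setminus M$ with $y > x$ gives the contradiction $y \in M$. For $M_{\ge \sigma} \subseteq \mathfrak{c}(M)$, fix $x \in M$ with $x \ge \sigma$ and any $y \in \widetilde{M}$. Since $\widetilde{M}$ is a monoid containing $M$, the sum $x+y$ lies in $\widetilde{M}$ and satisfies $x+y \ge \sigma$. If $x+y > \sigma$, then $x+y \notin \widetilde{M} \setminus M$ (whose elements are all bounded above by $\sigma$), so $x+y \in M$; the remaining possibility $x+y = \sigma$ combined with $x \ge \sigma$ and $y \ge 0$ forces $x = \sigma$ and $y=0$, whence $x+y = x \in M$ as well. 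This shows $x + \widetilde{M} \subseteq M$, i.e., $x \in \mathfrak{c}(M)$.

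I expect the main technical point to be the subtraction move $y - x \in \widetilde{M}$, which depends essentially on the description $\widetilde{M} = \gp(M) \cap \qq_{\ge 0}$ from Proposition \ref{prop:closure of a PM} and would not be available for a general monoid. The edge case $x+y = \sigma$ in (2b) also warrants attention, since $\sigma$ may or may not be attained in $\widetilde{M} \setminus M$; the unified argument above handles both situations without a case split.
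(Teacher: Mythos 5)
Your proof is correct and follows essentially the same route as the paper: the inclusion $\mathfrak{c}(M)\subseteq M$ via $x=x+0$, the subtraction move $y-x\in\gp(M)\cap\qq_{\ge 0}=\widetilde{M}$ for the emptiness/upper-bound arguments, and the observation that every element of $\widetilde{M}$ exceeding $\sigma$ lies in $M$ for the reverse inclusion in (2b). The only (cosmetic) difference is that you resolve the boundary case by splitting on $x+y>\sigma$ versus $x+y=\sigma$, whereas the paper splits on whether $\sigma\in M$; both handle the edge case correctly.
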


\begin{proof}
	To verify~(1), note that $\widetilde{M} \subseteq \qq_{\ge 0}$ implies that $\mathfrak{c}(M) \subseteq \qq_{\ge 0}$. As a result, $\mathfrak{c}(M) \subseteq \gp(M) \cap \qq_{\ge 0} = \widetilde{M}$. This, along with the obvious fact that $\widetilde{M} \subseteq \mathfrak{c}(M)$, implies~(1). 
	
	To show~(2), suppose that $M$ is not root-closed. It follows from Proposition~\ref{prop:closure of a PM} that $\mathfrak{c}(M) \subseteq \gp(M) \cap \qq_{\ge 0} = \widetilde{M}$. \\
	\indent \emph{Case (a)}: Take $\widetilde{x} \in \widetilde{M}$. Since $\widetilde{M} \setminus M$ is unbounded, there exists $\widetilde{x}_1 \in \widetilde{M} \setminus M$ such that $\widetilde{x}_1 > \widetilde{x}$. Then taking $\widetilde{y} := \widetilde{x}_1 - \widetilde{x} \in \widetilde{M}$, we can see that $\widetilde{x} + \widetilde{y} = \widetilde{x}_1 \notin M$. Therefore $\widetilde{x} \notin \mathfrak{c}(M)$. So we conclude that $\mathfrak{c}(M) = \emptyset$. \\
	\indent \emph{Case (b)}: As in the above paragraph, we can argue that no element in $\widetilde{M}_{< \sigma}$ is in $\mathfrak{c}(M)$ and then $\mathfrak{c}(M) \subseteq M_{\ge \sigma}$. For the reverse inclusion, take $x \in M_{\ge \sigma}$. If $\sigma \notin M$, then $x > \sigma$ and so $x + \widetilde{M} \subseteq (\widetilde{M} + \widetilde{M})_{> \sigma} \subseteq \widetilde{M}_{> \sigma} = M_{> \sigma} \subset M$. Thus, $x \in \mathfrak{c}(M)$ when $\sigma \notin M$. If $\sigma \in M$, then $\widetilde{M}_{\ge \sigma} = M_{\ge \sigma}$ and so $x + \widetilde{M} \subseteq (\widetilde{M} + \widetilde{M})_{\ge \sigma} \subseteq \widetilde{M}_{\ge \sigma} \subseteq M$. Therefore $x \in \mathfrak{c}(M)$ also when $\sigma \in M$. Hence $M_{\ge \sigma} \subseteq \mathfrak{c}(M)$.
\end{proof}

\begin{remark}
	With notation as in Proposition~\ref{prop:conductor of a PM}.\ref{part 2: conductor of PM}, although $\widetilde{M}_{> \sigma} = M_{> \sigma}$ holds, it can happen that $\widetilde{M}_{\ge \sigma} \neq M_{\ge \sigma}$. For instance, consider the Puiseux monoid $\{0\} \cup \qq_{>1}$.
\end{remark}

\smallskip
\subsection{Homomorphisms Between Puiseux Monoids}

As we are about to show, the homomorphisms between Puiseux monoids are those given by rational multiplication. 

\begin{prop} \label{prop:homomorphisms between PM}
	The homomorphisms between Puiseux monoids are given by rational multiplication.
\end{prop}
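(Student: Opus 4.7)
The plan is to show that every monoid homomorphism $\varphi \colon M_1 \to M_2$ between Puiseux monoids must be of the form $x \mapsto rx$ for some uniquely determined $r \in \qq_{\ge 0}$. First I would dispose of the trivial case $M_1 = \{0\}$, where one can take $r = 0$. For the interesting case, the point is that any two positive rationals are commensurable: given $x \in M_1^\bullet$ and a fixed reference element $x_0 \in M_1^\bullet$, writing $x_0 = a/b$ and $x = c/d$ with $a,b,c,d \in \nn$ produces positive integers $m := bc$ and $n := ad$ that satisfy the identity $n x = m x_0$ inside $M_1$.

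Next I would set the candidate ratio $r := \varphi(x_0)/x_0$, which lies in $\qq_{\ge 0}$ because $\varphi(x_0) \in M_2 \subseteq \qq_{\ge 0}$ and $x_0 > 0$. Applying $\varphi$ to the equality $nx = m x_0$ yields $n\varphi(x) = m \varphi(x_0)$, an identity inside $M_2$. Since $n \ne 0$, I can solve for $\varphi(x)$ in the ambient group $\gp(M_2) \subseteq \qq$ to obtain
\[
    \varphi(x) = \frac{m}{n}\varphi(x_0) = \frac{x}{x_0}\varphi(x_0) = rx.
\]
The case $x = 0$ is immediate from $\varphi(0) = 0 = r \cdot 0$, so the formula $\varphi(x) = rx$ holds on all of $M_1$. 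Independence of $r$ from the choice of $x_0$ is automatic: replacing $x_0$ by any other $x_0' \in M_1^\bullet$ and repeating the computation gives $\varphi(x_0')/x_0' = rx_0'/x_0' = r$, and uniqueness follows for the same reason.

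I do not foresee a genuine obstacle. The argument rests entirely on the fact that the ambient group $\qq$ forces any pair of elements of $M_1^\bullet$ to be commensurable, and that monoid homomorphisms respect the resulting integer multiples. The only mild subtlety is that one must solve for $\varphi(x)$ inside $\gp(M_2)$ rather than inside $M_2$ itself, but since $\gp(M_2) \subseteq \qq$ by \eqref{eq:difference group of a PM}, division by the positive integer $n$ is unproblematic and the resulting value automatically lies back in $M_2$.
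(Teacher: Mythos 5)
Your proof is correct. The one identity that carries all the weight, $n\varphi(x)=\varphi(nx)=\varphi(mx_0)=m\varphi(x_0)$ for positive integers $m,n$ with $nx=mx_0$, is valid because a monoid homomorphism commutes with taking integer multiples, and solving for $\varphi(x)$ inside $\gp(M_2)\subseteq\qq$ is harmless exactly as you say. The paper proves the same statement by the same underlying commensurability trick, but organizes it differently: it first passes to the submonoid $N:=M\cap\nn_0$, takes its minimal generating set $\{n_1,\dots,n_k\}$, fixes $q=\varphi(n_j)/n_j$ for some generator with $\varphi(n_j)\neq 0$, and uses the relations $n_i\varphi(n_j)=\varphi(n_in_j)=n_j\varphi(n_i)$ together with the expansion $\mathsf{n}(r)=\alpha_1n_1+\cdots+\alpha_kn_k$ to conclude $\varphi(r)=rq$. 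Your version shows that this detour through $M\cap\nn_0$ and its generators is unnecessary: picking a single reference element $x_0$ and comparing every $x$ directly to it gives the same conclusion with less bookkeeping, and it also handles the trivial homomorphism and the independence of $r$ from $x_0$ cleanly. The paper's route does have the small side benefit of exhibiting the multiplier as a ratio attached to an integer generator, but nothing in the statement requires that.
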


\begin{proof}
	Every rational-multiplication map is clearly a homomorphism. Suppose, on the other hand, that $\varphi \colon M \to M'$ is a homomorphism between Puiseux monoids. As the trivial homomorphism is multiplication by $0$, one can assume without loss of generality that $\varphi$ is nontrivial. Let $\{n_1, \dots, n_k\}$ be the minimal generating set of the additive monoid $N := M \cap \nn_0$. Since $\varphi$ is nontrivial, $k \ge 1$ and $\varphi(n_j) \neq 0$ for some $j \in \ldb 1,k \rdb$. Set $q = \varphi(n_j)/n_j$ and then take $r \in M^\bullet$ and $\alpha_1, \dots, \alpha_k \in \nn_0$ such that $\mathsf{n}(r) = \alpha_1 n_1 + \dots + \alpha_k n_k$. As $n_i \varphi(n_j) = \varphi(n_i n_j) = n_j \varphi(n_i)$ for every $i \in \ldb 1,k \rdb$,
	\[
		\varphi(r) = \frac 1{\mathsf{d}(r)} \varphi(\mathsf{n}(r)) = \frac 1{\mathsf{d}(r)} \sum_{i=1}^k \alpha_i \varphi(n_i) = \frac 1{\mathsf{d}(r)} \sum_{i=1}^k \alpha_i n_i \frac{\varphi (n_j)}{n_j} = rq.
	\]
	Thus, the homomorphism $\varphi$ is multiplication by $q \in \mathbb{Q}_{>0}$.
\end{proof}

\medskip
\section{Atomic Structure}
\label{sec:atomicity}

It is well known that in the class consisting of all monoids, the following chain of implications holds.
\begin{equation} \label{eq:monoid atomicity taxonomy}
	\textbf{UFM} \ \Rightarrow \ \big[ \textbf{HFM}, \ \textbf{FFM} \big] \ \Rightarrow \ \textbf{BFM} \ \Rightarrow \ \textbf{ACCP} \ \Rightarrow \ \textbf{atomic monoid}
\end{equation}
It is also known that, in general, none of the implications in~(\ref{eq:monoid atomicity taxonomy}) is reversible (even in the class of integral domains~\cite{AAZ90}). In this section, we provide various examples to illustrate that none of the above implications, except the first one, is reversible in the class of Puiseux monoids. We characterize the Puiseux monoids belonging to the first two classes of the chain of implications~(\ref{eq:monoid atomicity taxonomy}). For each of the last four classes, we find a family of Puiseux monoids belonging to such a class but not to the class right before.

\smallskip
\subsection{A Class of Atomic Puiseux Monoids}

We begin this section collecting a simple characterization of finitely generated Puiseux monoids in terms of the atomicity.

\begin{prop}
	A Puiseux monoid $M$ is finitely generated if and only if $M$ is atomic and $\mathcal{A}(M)$ is finite.
\end{prop}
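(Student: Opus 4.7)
The plan is to prove both directions directly from the definitions, leveraging one already-cited standard fact and the reducedness of Puiseux monoids.

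For the forward direction, assume $M$ is finitely generated, with finite generating set $S$. The paper has already cited \cite[Proposition~2.7.8(4)]{GH06} to note that every finitely generated monoid is atomic, so atomicity of $M$ is free. To show $\mathcal{A}(M)$ is finite, I would establish the general observation that in a reduced monoid, every generating set contains every atom. Concretely, given $a \in \mathcal{A}(M)$, write $a = \sum_{s \in S} c_s\, s$ with $c_s \in \mathbb{N}_0$ and only finitely many $c_s$ nonzero. Since $a$ is an atom and $M$ is reduced (so $U(M) = \{0\}$), all but one term on the right must be zero, and the remaining term must equal $a$; hence $a \in S$. Since Puiseux monoids are reduced, this forces $\mathcal{A}(M) \subseteq S$, so $\mathcal{A}(M)$ is finite.

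For the reverse direction, suppose $M$ is atomic with $\mathcal{A}(M)$ finite. By the definition of atomicity, every element of $M \setminus U(M) = M^\bullet$ can be written as a finite sum of atoms, so $M = \langle \mathcal{A}(M) \rangle$. Since $\mathcal{A}(M)$ is a finite generating set, $M$ is finitely generated by definition.

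The main obstacle is, frankly, minimal: the only step that is not immediate from definitions is the observation that atoms lie in every generating set of a reduced monoid, and this is a one-line argument using reducedness together with the defining property of an atom. No further machinery is needed.
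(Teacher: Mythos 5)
Your proof is correct, but it takes a different route from the paper on the forward direction. The paper disposes of it in one line by invoking Proposition~\ref{prop:fg PM are NM}: a finitely generated Puiseux monoid is isomorphic to a numerical monoid (clear denominators by $\mathrm{lcm}\,\mathsf{d}(S^\bullet)$), and numerical monoids are known to be atomic with finitely many atoms. You instead argue directly at the level of general reduced monoids: atomicity comes from \cite[Proposition~2.7.8(4)]{GH06}, and finiteness of $\mathcal{A}(M)$ follows from the observation that in a reduced monoid every atom must appear in every generating set. That observation is sound — writing $a = \sum_{s \in S} c_s s$ and using that $a$ cannot split as a sum of two nonzero elements forces exactly one nonzero summand with coefficient $1$, so $a \in S$ — though you should note explicitly that the coefficient of the surviving generator is $1$ (otherwise $a = s + (c_s-1)s$ would again be a nontrivial splitting). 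Your argument is more self-contained and proves the stronger general fact that a reduced monoid is finitely generated if and only if it is atomic with finitely many atoms, with no appeal to the special arithmetic of $\qq_{\ge 0}$; the paper's version is shorter because it leans on structural machinery already established earlier in the section. The reverse direction is identical in both.
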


\begin{proof}
	The direct implication follows from Proposition~\ref{prop:fg PM are NM}. The reverse implication is obvious because the atomicity of $M$ means that $M$ is generated by $\mathcal{A}(M)$.
\end{proof}

Corollary~\ref{cor:closure of a non-finitely generated PM is antimatter} yields, however, instances of nonfinitely generated Puiseux monoids containing no atoms. As the next example shows, for every $n \in \nn$ there exists a nonfinitely generated Puiseux monoid containing exactly $n$ atoms.

\begin{example}
	Let $m \in \nn$, and take distinct prime numbers $p$ and $q$ with $q > m$. Consider the Puiseux monoid $M = \big\langle \ldb m, 2m-1 \rdb \cup \{qp^{-m-i} \mid i \in \nn\} \big\rangle$. To verify that $\mathcal{A}(M) = \ldb m, 2m-1 \rdb$, write $a \in \ldb m, 2m-1 \rdb$ as
	\begin{equation} \label{eq:monoid with m atoms}
		a = a' + \sum_{n=1}^N \alpha_n \frac{q}{p^{m+n}},
	\end{equation}
	where $a' \in \{0\} \cup \ldb m, 2m-1 \rdb$ and $\alpha_n \in \nn_0$ for every $n \in \ldb 1,N \rdb$. After clearing denominators in both sides of~(\ref{eq:monoid with m atoms}), one finds that $q \mid a - a'$. Therefore $a = a'$ and $\alpha_1 =  \dots = \alpha_N = 0$, which implies that $a \in \mathcal{A}(M)$. Thus, $\ldb m, 2m-1 \rdb \subseteq \mathcal{A}(M)$. Clearly, $qp^{-m-i} \notin \mathcal{A}(M)$ for any $i \in \nn$. Hence $\mathcal{A}(M) = \ldb m, 2m-1 \rdb$, and so $|\mathcal{A}(M)| = m$. As $\mathsf{d}(M^\bullet)$ is not finite, it follows from Proposition~\ref{prop:fg PM are NM} that $M$ is not finitely generated.
\end{example}

Perhaps the class of nonfinitely generated Puiseux monoids that has been most thoroughly studied is that one consisting of cyclic rational semirings~\cite{CGG19}.
\smallskip

\noindent{\bf Notation.}\;  For $r \in \qq_{>0}$, we let $S_r$ denote the Puiseux monoid $\langle r^n \mid n \in \nn_0 \rangle$.
\smallskip

Although $S_r$ is indeed a cyclic rational semiring, we shall only be concerned here with its additive structure. The atomicity of $S_r$ was first studied in \cite[Section~6]{GG17} while several factorization aspects were investigated in~\cite{CGG19}. 

\begin{prop} \label{prop:atomic classification of multiplicative cyclic Puiseux monoids}
	For $r \in \qq_{> 0}$, consider the Puiseux monoid~$S_r$. Then the following statements hold.
	\begin{enumerate}
		\item If $r \ge 1$, then $S_r$ is atomic and \label{item:case r at least 1}
		\begin{itemize}
			\item either $r \in \nn$ and so $S_r = \nn_0$,
			
			\item or $r \notin \nn$ and so $\mathcal{A}(S_r) = \{r^n \mid n \in \nn_0\}$.
		\end{itemize}
		\item If $r < 1$, then \label{item:case r less than 1}
		\begin{itemize}
			\item either $\mathsf{n}(r) = 1$ and so $S_r$ is antimatter,
			
			\item or $\mathsf{n}(r) \neq 1$ and $S_r$ is atomic with $\mathcal{A}(S_r) = \{r^n \mid n \in \nn_0\}$.
		\end{itemize}
	\end{enumerate}
\end{prop}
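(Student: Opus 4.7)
Plan: Write $r = p/q$ with $\gcd(p,q) = 1$ and $p, q \in \nn$. The statement splits into four subcases based on whether $r \ge 1$ or $r < 1$ and on whether $p = 1$ or $p \ge 2$. Two of them are quick. If $r = p \in \nn$, then $\{r^n\} \subseteq \nn_0$ and $1 = r^0 \in S_r$, whence $S_r = \nn_0$. If $r = 1/q$ with $q \ge 2$, the relation $qr = 1$ gives $r^k = q \, r^{k+1}$; for any $x = \sum_k c_k r^k \in S_r^\bullet$, picking $K$ with $c_K > 0$ and substituting $c_K r^K = c_K q \, r^{K+1}$ yields a decomposition whose coefficient at $r^{K+1}$ is at least $c_K q \ge 2$, so $x = r^{K+1} + (x - r^{K+1})$ splits $x$ into two nonzero elements of $S_r$. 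Hence $S_r$ is antimatter.

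The bulk of the proof is the two remaining subcases: $r > 1$ with $r \notin \nn$ (so $p > q \ge 2$) and $r < 1$ with $p \ge 2$ (so $q > p \ge 2$). I would treat them uniformly. Since $\{r^n : n \in \nn_0\}$ generates $S_r$, we have $\mathcal{A}(S_r) \subseteq \{r^n : n \in \nn_0\}$, and both the atomicity of $S_r$ and the description of $\mathcal{A}(S_r)$ follow once I show that each $r^n$ is an atom. For this it suffices to prove that the only way to write $r^n = \sum_{k \ge 0} c_k r^k$ with $c_k \in \nn_0$ of finite support is the trivial one: $c_n = 1$ and $c_k = 0$ for $k \neq n$.

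To prove this claim, let $K = \max\{k : c_k > 0\}$ and clear denominators to obtain the integer identity $p^n q^{K-n} = \sum_k c_k \, p^k q^{K-k}$, which forces $K \ge n$ (otherwise the left side would not be an integer, as $\gcd(p,q) = 1$ and $q \ge 2$). When $K = n$, the equation $r^n = c_n r^n + \sum_{k < n} c_k r^k$ with $c_n \ge 1$ and all $c_k \ge 0$ forces $c_n = 1$ and $c_k = 0$ for $k < n$ by nonnegativity. When $K > n$, reducing the integer identity modulo $q$ gives $c_K p^K \equiv 0 \pmod q$, so $q \mid c_K$, and the substitution $c_K r^K = (c_K/q) \cdot p \cdot r^{K-1}$ produces a new decomposition of $r^n$ whose largest exponent drops to $K-1$ and whose coefficient at $r^{K-1}$ is at least $p \ge 2$. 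Iterating this substitution $K - n$ times produces a decomposition of $r^n$ with largest exponent $n$ whose coefficient at $r^n$ is at least $p \ge 2$, contradicting the $K = n$ analysis.

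The step I expect to require the most care is the iteration of the reduction: one has to verify that the mod-$q$ divisibility of the leading coefficient persists at every intermediate stage, which is fine because each intermediate expression is itself a decomposition of $r^n$ whose largest exponent still exceeds $n$, so the same mod-$q$ argument applies verbatim. The hypothesis $p \ge 2$ is used in an essential way, both in guaranteeing that the coefficient at $r^{K-1}$ produced by each substitution is at least $2$ and in reaching the final contradiction; this is precisely why the subcase $p = 1$ with $r < 1$ behaves differently and is handled by the antimatter argument in the first paragraph.
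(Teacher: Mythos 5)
Your proof is correct; the skeleton (dispatching $r \in \nn$ and $\mathsf{n}(r)=1$ quickly, then showing every generator $r^n$ is an atom in the two remaining cases) matches the paper, but the technical heart is genuinely different. The paper treats those two cases separately and leans on the monotonicity of $(r^k)_{k \ge 0}$: for $r>1$ it notes that a representation of $r^j$ can only involve powers $r^i$ with $i \le j$ and then clears denominators, while for $r<1$ it notes the representation can only involve $i \ge j$ and rules out a zero coefficient at $r^j$ with a single $p$-adic valuation computation at a prime $p$ dividing $\mathsf{n}(r)$. You instead run one uniform argument through the denominator: clearing denominators forces the top exponent $K$ of any representation of $r^n$ to satisfy $K \ge n$; when $K>n$, reduction modulo $\mathsf{d}(r)$ shows $\mathsf{d}(r)$ divides the leading coefficient, enabling a descent on $K$ that terminates in a representation with top exponent $n$ and coefficient at $r^n$ at least $\mathsf{n}(r) \ge 2$, contradicting the easy positivity analysis of the $K=n$ case. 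Your route buys uniformity (no case split on $r>1$ versus $r<1$, and no use of the ordering of the powers beyond the trivial $K=n$ step) at the cost of having to verify that the divisibility persists through the iteration, which you correctly flag and justify since each intermediate expression is again a representation of $r^n$ with top exponent exceeding $n$. The paper's valuation argument is a one-shot shortcut that avoids the descent but works with the numerator rather than the denominator of $r$. Both arguments are complete; I see no gaps in yours.
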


\begin{proof}
	To argue~(\ref{item:case r at least 1}), suppose that $r \ge 1$. If $r \in \nn$, then it easily follows that $S_r = \nn_0$. Then we assume that $r \notin \nn$. Clearly, $\mathcal{A}(S_r) \subseteq \{r^n \mid n \in \nn_0\}$. To check the reverse inequality, fix $j \in \nn_0$ and write $r^j = \sum_{i=0}^N \alpha_i r^i$ for some $N \in \nn_0$ and coefficients $\alpha_0, \dots, \alpha_N \in \nn_0$. Because $(r^n)_{n \in \nn_0}$ is an increasing sequence, one can assume that $N \le j$. Then, after clearing denominators in $r^j = \sum_{i=0}^N \alpha_i r^i$ we obtain $N=j$ as well as $\alpha_j = 1$ and $\alpha_i = 0$ for every $i \neq j$. Hence $r^j \in \mathcal{A}(S_r)$ for every $j \in \nn_0$, yielding the second statement of~(\ref{item:case r at least 1}).
	
	Now suppose that $r < 1$. If $\mathsf{n}(r) = 1$, then $r^n = \mathsf{d}(r)r^{n+1}$ for every $n \in \nn_0$, and so~$S_r$ is antimatter, which is the first statement of~(\ref{item:case r less than 1}). Finally, suppose that $\mathsf{n}(r) > 1$. Fix $j \in \nn$, and notice that $r^i \nmid_{S_r} r^j$ for any $i < j$. Then write  $r^j = \sum_{i=j}^{j+k} \beta_i r^i$, for some $k \in \nn_0$ and $\beta_i \in \nn_0$ for every $i \in \ldb j, j+k \rdb$. Notice that $\beta_j \in \{0,1\}$. Suppose for a contradiction that $\beta_j = 0$. In this case, $k \ge 1$. Let $p$ be a prime dividing $\mathsf{n}(r)$, and let~$\alpha$ be the maximum power of $p$ dividing $\mathsf{n}(r)$. From $r^j = \sum_{i=j}^{j+k} \beta_i r^i$ one obtains
	\begin{equation} \label{eq:multiplicative cyclic 3}
		\alpha j = \pval\big( r^j \big) = \pval \bigg( \sum_{i=1}^k \beta_{j+i} r^{j+i} \bigg) \ge \min_{i \in \ldb 1,k \rdb} \big\{ \pval \big(\beta_{j+i} r^{j+i} \big) \big\} \ge \alpha(j+m),
	\end{equation}
	where $m = \min\{i \in \ldb 1, k \rdb \mid \beta_{j+i} \neq 0\}$. The inequality \eqref{eq:multiplicative cyclic 3} yields the desired contradiction. Hence $r^j \in \mathcal{A}(S_r)$ for every $j \in \nn_0$, yielding the second statement of~(\ref{item:case r less than 1}).
\end{proof}

\begin{cor}
	For each $r \in \qq \cap (0,1)$ with $\mathsf{n}(r) \neq 1$, the monoid $S_r$ is an atomic monoid that does not satisfy the ACCP.
\end{cor}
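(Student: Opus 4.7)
The plan is to split the corollary into its two assertions. Atomicity of $S_r$ is already built into Proposition~\ref{prop:atomic classification of multiplicative cyclic Puiseux monoids}: since $r \in (0,1)$ and $\mathsf{n}(r) \neq 1$, case~(2) of that proposition gives that $S_r$ is atomic with $\mathcal{A}(S_r) = \{r^n \mid n \in \nn_0\}$. So the content of the corollary is really the failure of the ACCP, which I would establish by exhibiting an explicit strictly ascending chain of principal ideals.

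The chain I would use is $(x_n + S_r)_{n \ge 0}$ with $x_n := \mathsf{n}(r) r^n$. To show $x_n + S_r \subseteq x_{n+1} + S_r$, it suffices to check that $x_{n+1}$ divides $x_n$ in $S_r$, i.e.\ that $x_n - x_{n+1} \in S_r$. The defining identity $\mathsf{d}(r) r^{n+1} = \mathsf{n}(r) r^n$ (just $r = \mathsf{n}(r)/\mathsf{d}(r)$ multiplied through by $\mathsf{d}(r) r^n$) immediately yields
\[
    x_n - x_{n+1} \;=\; \mathsf{n}(r) r^n - \mathsf{n}(r) r^{n+1} \;=\; \mathsf{d}(r) r^{n+1} - \mathsf{n}(r) r^{n+1} \;=\; \bigl(\mathsf{d}(r) - \mathsf{n}(r)\bigr) r^{n+1}.
\]
Because $r < 1$ forces $\mathsf{d}(r) > \mathsf{n}(r)$, the coefficient $\mathsf{d}(r) - \mathsf{n}(r)$ is a positive integer, so the right-hand side lies in $S_r$, as needed.

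For strictness of each inclusion I would argue that $x_{n+1} \notin x_n + S_r$: this would require $x_{n+1} - x_n \in S_r$, but $r < 1$ gives $x_{n+1} < x_n$, hence $x_{n+1} - x_n < 0$, which is impossible since $S_r \subseteq \qq_{\ge 0}$. Therefore $x_n + S_r \subsetneq x_{n+1} + S_r$ for every $n \ge 0$, producing a non-stabilizing ascending chain of principal ideals and proving that $S_r$ does not satisfy the ACCP.

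There is no real obstacle here beyond guessing the right chain; once the witnesses $x_n = \mathsf{n}(r) r^n$ are written down, both the divisibility and the strictness are one-line computations. The mild subtlety worth flagging is why the hypothesis $\mathsf{n}(r) \neq 1$ is needed at all for the ACCP half: without it, the same calculation still yields $x_n - x_{n+1} \in S_r$, but atomicity already fails (by Proposition~\ref{prop:atomic classification of multiplicative cyclic Puiseux monoids}(2), $S_r$ is antimatter when $\mathsf{n}(r) = 1$), so the atomic-but-not-ACCP conclusion would be vacuous. Hence the hypothesis is essential only to secure atomicity, while the chain construction goes through under $r < 1$ alone.
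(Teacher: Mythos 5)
Your proof is correct and follows essentially the same route as the paper: atomicity is quoted from Proposition~\ref{prop:atomic classification of multiplicative cyclic Puiseux monoids}, and the failure of the ACCP is witnessed by the same chain $(\mathsf{n}(r)r^n + S_r)_{n\ge 0}$ via the identity $\mathsf{n}(r)r^n = (\mathsf{d}(r)-\mathsf{n}(r))r^{n+1} + \mathsf{n}(r)r^{n+1}$. Your explicit strictness argument (that $x_{n+1}-x_n<0$ rules out the reverse divisibility) is a welcome elaboration of what the paper dismisses as ``clear.''
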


\begin{proof}
	Proposition~\ref{prop:atomic classification of multiplicative cyclic Puiseux monoids} guarantees the atomicity of $S_r$. To verify that $S_r$ does not satisfy the ACCP, consider the sequence of principal ideals $(\mathsf{n}(r)r^n + S_r)_{n \in \nn_0}$. Since
	\[
		\mathsf{n}(r)r^n = \mathsf{d}(r)r^{n+1} = (\mathsf{d}(r) - \mathsf{n}(r))r^{n+1} + \mathsf{n}(r)r^{n+1},
	\]
	$\mathsf{n}(r)r^{n+1} \mid_{S_r} \mathsf{n}(r)r^n$ for every $n \in \nn_0$. Therefore $(\mathsf{n}(r)r^n + S_r)_{n \in \nn_0}$ is an ascending chain of principal ideals. In addition, it is clear that such a chain of ideals does not stabilize. Hence $S_r$ does not satisfy the ACCP, which completes the proof.
\end{proof}

\smallskip
\subsection{A Class of ACCP Puiseux Monoids}

We proceed to present a class of ACCP Puiseux monoids containing a subclass of monoids that are not BFMs.

\begin{theorem} \label{thm:a class of ACCP monoids}
	Every submonoid of $\langle 1/p \mid p \in \pp \rangle$ satisfies the ACCP.
\end{theorem}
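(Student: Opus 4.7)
The plan is to prove the stronger statement that the ambient monoid $N := \langle 1/p \mid p \in \pp \rangle$ itself satisfies the ACCP, which yields the theorem immediately: any strictly ascending chain of principal ideals in a submonoid $M \subseteq N$ corresponds to a sequence $x_0 > x_1 > \cdots$ with $x_n$ and $x_n - x_{n+1}$ all in $M \subseteq N$, and hence to a chain in $N$ itself. So I would assume for contradiction that an infinite descending sequence $x_0 > x_1 > \cdots$ exists with $x_n,\, y_n := x_n - x_{n+1} \in N^\bullet$ for all $n$, and rule this out.

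The key tool is a canonical reduced form: I would first show that every $x \in N$ has a unique representation
\[
	x = k(x) + \sum_{p \in \pp} \frac{\beta_p(x)}{p}, \qquad k(x) \in \nn_0,\ \beta_p(x) \in \ldb 0, p-1 \rdb,
\]
with $\beta_p(x) = 0$ for all but finitely many primes $p$. Existence follows by applying the division algorithm $\alpha_p = q_p \cdot p + \beta_p$ to each coefficient of a generating expression $x = \sum_p \alpha_p/p$. Uniqueness is a standard $p$-adic argument: clearing denominators and reducing modulo a prime $q$ in the (union of) support(s) isolates $\beta_q$ mod $q$, which pins it down in $\ldb 0, q-1 \rdb$.

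Translating $x_n = x_{n+1} + y_n$ into the reduced form and invoking uniqueness yields, for every prime $p$, a carry bit $\delta_n^{(p)} \in \{0,1\}$ (equal to $1$ exactly when $\beta_p(x_{n+1}) + \beta_p(y_n) \geq p$) satisfying
\[
	\beta_p(x_n) = \beta_p(x_{n+1}) + \beta_p(y_n) - \delta_n^{(p)}\, p, \qquad k(x_n) = k(x_{n+1}) + k(y_n) + c_n,
\]
where $c_n := \sum_p \delta_n^{(p)}$ is the total number of carries at step $n$ (this sum is finite because the supports are). Telescoping the integer-part identity gives $\sum_{i<n}(k(y_i) + c_i) = k(x_0) - k(x_n) \leq k(x_0) < \infty$, forcing $k(y_n) = c_n = 0$ for all $n$ past some threshold $N_0$.

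For $n \geq N_0$ the relations collapse to $\beta_p(x_{n+1}) = \beta_p(x_n) - \beta_p(y_n)$ for every prime $p$. Define the nonnegative integer $T_n := \sum_p \beta_p(x_n)$ (finite by the finite-support property). Then $T_{n+1} = T_n - \sum_p \beta_p(y_n)$, and since $y_n > 0$ with $k(y_n) = 0$ we must have $\sum_p \beta_p(y_n) \geq 1$, so $T_{n+1} \leq T_n - 1$. This produces an infinite strictly decreasing sequence of nonnegative integers, the desired contradiction. The principal technical hurdle I expect is setting up the reduced form and the carry identity cleanly; once those are in place, the descent on $T_n$ is immediate.
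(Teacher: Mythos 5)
Your proposal is correct and follows essentially the same route as the paper: the same reduction to the ambient monoid $\langle 1/p \mid p \in \pp \rangle$, the same unique reduced form $x = k(x) + \sum_{p} \beta_p(x)/p$ with $\beta_p(x) \in \ldb 0, p-1 \rdb$, and the same two-tier descent (first on the integer part, then on $\sum_p \beta_p$). Your explicit carry bookkeeping and telescoping is just a more detailed rendering of the paper's observation that divisibility weakly decreases the integer part and, once that has stabilized, strictly decreases the coefficient sum.
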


\begin{proof}
	It suffices to prove that $M = \langle 1/p \mid p \in \pp \rangle$ satisfies the ACCP. Let $(p_n)_{n \in \nn}$ be a strictly increasing sequence with underlying set $\pp$. One can readily verify that for all $x \in M$ there exist $k,n \in \nn_0$ and $\alpha_1, \dots, \alpha_k \in \nn_0$ such that
	\begin{align} \label{eq:decomposition existence}
	x = n + \sum_{j=1}^k \alpha_j \frac{1}{p_j},
	\end{align}
	where $\alpha_k \neq 0$ and $\alpha_j \in \ldb 0,p_j \rdb$ for every $j \in \ldb 1,k \rdb$. Let us now check that the sum decomposition in (\ref{eq:decomposition existence}) is unique. To do this, take $\ell, m \in \nn_0$ and $\beta_1, \dots, \beta_\ell \in \nn_0$ with $\beta_\ell \neq 0$ and $\beta_j \in \ldb 0, p_j \rdb$ for every $j \in \ldb 1,\ell \rdb$ such that
	\begin{align} \label{eq:decomposition uniqueness}
	n + \sum_{j=1}^k \alpha_j \frac{1}{p_j} = m + \sum_{j=1}^{\ell} \beta_j \frac{1}{p_j}.
	\end{align}
	After completing with zero coefficients, we may assume that $k = \ell$. For each $j \in \ldb 1,k \rdb$, one can isolate $(\alpha_j - \beta_j)/p_j$ in (\ref{eq:decomposition uniqueness}) and apply the $p_j$-adic valuation, to obtain that $p_j \mid \alpha_j - \beta_j$. This implies that $\alpha_j = \beta_j$ for each $j \in \ldb 1, k \rdb$. As a consequence, $n = m$, and the uniqueness of the decomposition in~(\ref{eq:decomposition existence}) follows.
	
	With notation as in~\eqref{eq:decomposition existence}, we set $n(x) := n$ and $s(x) := \alpha_1 + \dots + \alpha_k$. 
	Observe that if $x' \mid_{M_P} x$ for some $x' \in M_P$, then $n(x') \le n(x)$. In addition, observe that if $x' \in M$ divides $x$ in $M$ properly, then $n(x') = n(x)$ implies that $s(x') < s(x)$. As a consequence of these observations, one deduces that each sequence $(q_n)_{n \in \nn}$ in $M$ satisfying that $q_{n+1} \mid_{M} q_n$ for every $n \in \nn$ must stabilize. As a result, $M$ satisfies the ACCP.
\end{proof}

\begin{cor} \label{cor:ACCP that is not BFM}
	There are Puiseux monoids satisfying the ACCP that are not BFMs.
\end{cor}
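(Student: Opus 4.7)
The plan is to exhibit an explicit Puiseux monoid that satisfies the ACCP but fails to be a BFM. The natural candidate is the monoid $M := \langle 1/p \mid p \in \pp \rangle$ itself, since Theorem~\ref{thm:a class of ACCP monoids} already supplies the ACCP half of the conclusion for free. Thus the entire task reduces to producing a single element of $M$ whose set of lengths is unbounded.

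First, I would pin down the atoms of $M$. The proof of Theorem~\ref{thm:a class of ACCP monoids} establishes that every element of $M$ has a \emph{unique} representation of the form~(\ref{eq:decomposition existence}). Applying this uniqueness to $x = 1/p$ forces the trivial expression $1/p = 1\cdot(1/p)$, so no $1/p$ can be written as a sum of two or more nonzero elements of $M$. Hence $\{1/p \mid p \in \pp\} \subseteq \mathcal{A}(M)$, and in particular each $1/p$ is a genuine atom.

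Second, I would exhibit the witness. Take $x = 1 \in M$. For every prime $p$ the identity
\[
 1 \;=\; \underbrace{\frac{1}{p} + \frac{1}{p} + \cdots + \frac{1}{p}}_{p \text{ summands}}
\]
is a factorization of $1$ into $p$ atoms of $M$, so $p \in \mathsf{L}(1)$. Consequently $\pp \subseteq \mathsf{L}(1)$, and $\mathsf{L}(1)$ is infinite. This shows that $M$ is not a BFM, while Theorem~\ref{thm:a class of ACCP monoids} guarantees that it satisfies the ACCP, completing the proof.

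There is essentially no obstacle here: the corollary is a direct combination of Theorem~\ref{thm:a class of ACCP monoids} with the trivial observation that $1 = p\cdot (1/p)$ for every prime~$p$. The only point that requires mild justification is the irreducibility of each $1/p$, and that is an immediate consequence of the unique-representation property proved within Theorem~\ref{thm:a class of ACCP monoids}.
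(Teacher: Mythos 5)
Your proposal is correct and follows exactly the paper's own argument: the same monoid $M = \langle 1/p \mid p \in \pp \rangle$, the ACCP from Theorem~\ref{thm:a class of ACCP monoids}, and the witness $1 = p\cdot(1/p)$ showing $\mathsf{L}(1)$ is infinite. Your explicit justification that each $1/p$ is an atom (via the unique-representation property) is a welcome detail the paper leaves as ``it is clear.''
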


\begin{proof}
	Consider the Puiseux monoid $M = \langle 1/p \mid p \in \pp \rangle$. We have seen in Theorem~\ref{thm:a class of ACCP monoids} that $M$ satisfies the ACCP. However, it is clear that $p \in \mathsf{L}(1)$ for every $p \in \pp$. As $|\mathsf{L}(1)| = \infty$, the monoid $M$ is not a BFM.
\end{proof}

\smallskip
\subsection{Bounded Factorization Monoids}

Our next goal is to find a large class of Puiseux monoids that are BFMs. We do so with the following result.

\begin{theorem} \label{thm:BF sufficient condition}
	Let $M$ be a Puiseux monoid. If $0$ is not a limit point of $M^\bullet$, then $M$ is a BFM and hence is atomic.
\end{theorem}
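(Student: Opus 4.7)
The plan is to extract a positive uniform lower bound from the hypothesis and use it to bound factorization lengths. Set $\eta := \inf M^\bullet$. Since $0$ is not a limit point of $M^\bullet$ and $M \subseteq \qq_{\ge 0}$, the infimum $\eta$ is strictly positive (every neighborhood of $0$ misses $M^\bullet$, so $M^\bullet$ lies in some $[\eta_0,\infty)$ with $\eta_0 > 0$, forcing $\eta \ge \eta_0 > 0$).

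Next I would prove the crucial length bound: for every $x \in M^\bullet$ and every decomposition $x = y_1 + \cdots + y_k$ with $y_1, \dots, y_k \in M^\bullet$, one has $k \le x/\eta$. This is immediate, because each $y_i \ge \eta$ forces $x \ge k\eta$. In particular, the lengths of all factorizations of $x$ into atoms are bounded above by $x/\eta$, so $\mathsf{L}(x) \subseteq \ldb 1, \lfloor x/\eta \rfloor \rdb$ is finite.

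Before I can invoke the BFM definition, however, I must first establish that $M$ is atomic, since in the paper BFM is defined only for reduced atomic monoids. I would do this by verifying the ACCP and invoking \cite[Proposition~1.1.4]{GH06}. Indeed, if $x_1 + M \subsetneq x_2 + M \subsetneq \cdots$ were a strictly ascending chain of principal ideals, then writing $x_n = x_{n+1} + t_n$ with $t_n \in M^\bullet$ (strictness forces $t_n \notin U(M) = \{0\}$ since $M$ is reduced) gives $x_1 = x_n + t_1 + \cdots + t_{n-1}$, a decomposition of $x_1$ into $n$ nonzero summands. The length bound above then forces $n \le x_1/\eta$, contradicting infinite ascent. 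So $M$ satisfies the ACCP and is therefore atomic.

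Combining the two halves, $M$ is atomic with $|\mathsf{L}(x)| < \infty$ for every $x \in M$, so $M$ is a BFM. The argument is essentially a one-step calculation once the positivity of $\eta$ is observed; the only minor subtlety is keeping track of the order "ACCP first, then finite length sets," so that the atomicity requirement built into the definition of a BFM is met before it is used.
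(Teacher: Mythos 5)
Your proof is correct, and its engine is the same as the paper's: the hypothesis yields a positive lower bound $\eta$ on $M^\bullet$, and then any decomposition of $x$ into nonzero summands has at most $\lfloor x/\eta\rfloor$ parts, which immediately bounds $\mathsf{L}(x)$. Where you diverge is in how atomicity is secured. The paper uses the same bound directly: since the number of nonzero summands is bounded, one may choose a decomposition $x=a_1+\cdots+a_m$ with $m$ maximal, and maximality forces each $a_i$ to be an atom; this gives $M=\langle\mathcal{A}(M)\rangle$ with no outside input. You instead verify the ACCP and invoke \cite[Proposition~1.1.4]{GH06}, which is equally valid and has the small bonus of recording that $M$ satisfies the ACCP as an intermediate conclusion (though this also follows a posteriori from $M$ being a BFM). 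One cosmetic slip: in your chain argument $x_1=x_n+t_1+\cdots+t_{n-1}$ the term $x_n$ need not be nonzero, so you really get $n-1$ summands from $M^\bullet$ and the bound $n\le x_1/\eta+1$; this does not affect the contradiction. Your attention to establishing atomicity before invoking the definition of a BFM is exactly the point the paper's maximal-decomposition step is designed to handle.
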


\begin{proof}
	It is clear that $\mathcal{A}(M)$ consists of those elements of $M^\bullet$ that cannot be written as the sum of two positive elements of $M$. Since $0$ is not a limit point of $M$, there exists $\epsilon > 0$ such that $\epsilon < x$ for all $x \in M^\bullet$. Now we show that $M = \langle \mathcal{A}(M) \rangle$. Take $x \in M^\bullet$. Since $\epsilon$ is a lower bound for $M^\bullet$, the element $x$ can be written as the sum of at most $\lfloor x/\epsilon \rfloor$ elements of $M^\bullet$. Take the maximum $m \in \nn$ such that $x = a_1 + \dots + a_m$ for some $a_1, \dots, a_m \in M^\bullet$. By the maximality of $m$, it follows that $a_i \in \mathcal{A}(M)$ for every $i \in \ldb 1,m \rdb$, which means that $x \in \langle \mathcal{A}(M) \rangle$. Hence $M$ is atomic. We have already noticed that every element $x$ in $M^\bullet$ can be written as a sum of at most $\lfloor x/\epsilon \rfloor$ atoms, i.e., $|\mathsf{L}(x)| \le \lfloor x/\epsilon \rfloor$ for all $x \in M$. Thus, $M$ is a BFM.
\end{proof}

The converse of Theorem~\ref{thm:BF sufficient condition} does not hold, as the next example illustrates. 

\begin{example} \label{ex:BF PM with 0 as a limit point}
	Let $(p_n)_{n \in \nn}$ and $(q_n)_{n \in \nn}$ be two strictly increasing sequences of prime numbers satisfying that $q_n > p_n^2$ for every $n \in \nn$. Then set $M := \big \langle \frac{p_n}{q_n} \ \big{|} \ n \in \nn \big \rangle$. It follows from~\cite[Corollary~5.6]{GG17} that $M$ is atomic, and it is easy to verify that $\mathcal{A}(M) = \{p_n/q_n \mid n \in \nn\}$. To argue that $M$ is indeed a BFM, take $x \in M^\bullet$ and note that since both sequences $(p_n)_{n \in \nn}$ and $(q_n)_{n \in \nn}$ are strictly increasing, there exists $N \in \nn$ such that $q_n \nmid \mathsf{d}(x)$ and $p_n > x$ for every $n \ge N$. As a result, if $z \in \mathsf{Z}(x)$, then none of the atoms in $\{p_n/q_n \mid n > N\}$ can appear in $z$. From this, one can deduce that $\mathsf{Z}(x)$ is finite. Then $\mathsf{L}(x)$ is finite for any $x \in M$, and so $M$ is a BFM. However, $q_n > p_n^2$ for every $n \in \nn$ implies that $0$ is a limit point of $M^\bullet$.
\end{example}

As we have seen in Corollary~\ref{cor:ACCP that is not BFM}, not every ACCP Puiseux monoid is a BFM. However, it was proved in~\cite[Theorem~3.4]{GGT19} that under a mild assumption on conductors, each of these atomic conditions is equivalent to not having~$0$ as a limit point.

\begin{theorem} \label{thm:strongly primary characterization when conductor non-empty}
	If $M$ is a nontrivial Puiseux monoid with nonempty conductor, then the following statements are equivalent.
	\begin{enumerate}
		\item $0$ is not a limit point of $M^\bullet$.
		\vspace{2pt}
		
		\item $M$ is a BFM.
		\vspace{2pt}
		
		\item $M$ satisfies the ACCP.
	\end{enumerate}
\end{theorem}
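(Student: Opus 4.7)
The plan is to establish the implications $(1) \Rightarrow (2) \Rightarrow (3) \Rightarrow (1)$. The first is an immediate restatement of Theorem~\ref{thm:BF sufficient condition}, and the second is part of the universal chain~\eqref{eq:monoid atomicity taxonomy}. So the real content is $(3) \Rightarrow (1)$, which I will prove by contrapositive: assuming $0$ is a limit point of $M^\bullet$ while the conductor $\mathfrak{c}(M)$ is nonempty, I will exhibit a strictly ascending, non-stabilizing chain of principal ideals in $M$ to violate the ACCP.

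Fix $c \in \mathfrak{c}(M)$, available by hypothesis. Choose a sequence $(y_n)_{n \in \nn}$ in $M^\bullet$ with $y_n \to 0$, and by passing to a subsequence arrange $y_n < c/2^{n+1}$, so that the partial sums $s_n := y_1 + \dots + y_n$ remain strictly below $c$ for every $n$. Set $z_n := 2c - s_n$, so that $(z_n)_{n \in \nn}$ is a strictly decreasing sequence in $\qq_{>0}$ satisfying $z_n - z_{n+1} = y_{n+1} \in M^\bullet$.

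The crucial step is to argue that each $z_n$ lies in $M$. Writing $z_n = c + (c - s_n)$, observe that $c - s_n$ is a positive element of $\gp(M)$ and hence lies in $\widetilde{M} = \gp(M) \cap \qq_{\ge 0}$ by Proposition~\ref{prop:closure of a PM}. Since $\widetilde{M} = \widehat{M}$ for Puiseux monoids, the definition of the conductor gives $c + \widetilde{M} \subseteq M$, so $z_n \in M$. Now the identity $z_n = z_{n+1} + y_{n+1}$ with $y_{n+1} \in M^\bullet$ shows that $z_{n+1} \mid_M z_n$ and therefore $z_n + M \subseteq z_{n+1} + M$. Because every element of $z_n + M$ is bounded below by $z_n > z_{n+1}$, the element $z_{n+1}$ itself lies outside $z_n + M$, so the chain $(z_n + M)_{n \in \nn}$ is strictly ascending and never stabilizes. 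This contradicts ACCP, completing the contrapositive.

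The main obstacle is the construction of the ideal chain; the key idea is to use the conductor element $c$ as a ``buffer'' that absorbs positive elements of $\gp(M)$ back into $M$, thereby allowing us to subtract arbitrarily small positive quantities from a fixed element of $M$ without leaving $M$. The summability condition $y_n < c/2^{n+1}$ is precisely what keeps the shifted element $c - s_n$ positive at every stage so that the conductor trick applies uniformly.
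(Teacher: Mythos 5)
Your proof is correct and follows essentially the same route as the paper: both handle $(1)\Rightarrow(2)\Rightarrow(3)$ by citation and prove $(3)\Rightarrow(1)$ by building a strictly ascending, non-stabilizing chain of principal ideals of the form $(x - s_n + M)_{n\in\nn}$ from the partial sums $s_n$ of a null sequence in $M^\bullet$, using the nonempty conductor to guarantee that each $x - s_n$ stays inside $M$. The only (harmless) differences are that you work directly from a fixed positive conductor element $c$ and the defining property $c + \widehat{M} \subseteq M$ rather than through the description $\mathfrak{c}(M) = M_{\ge \sigma}$ of Proposition~\ref{prop:conductor of a PM}, and your null sequence need not consist of atoms, so you can skip the paper's preliminary appeal to the atomicity of ACCP monoids.
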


\begin{proof}
	The implications (1) $\Rightarrow$ (2) and (2) $\Rightarrow$ (3) follow from Theorem~\ref{thm:BF sufficient condition} and \cite[Corollary 1.3.3]{GH06}, respectively. Note that such implications do not depend on whether the conductor of $M$ is nonempty.
	
	We proceed to show that (3) $\Rightarrow$ (1). Suppose for a contradiction that $0$ is a limit point of $M^\bullet$. Since $M$ satisfies the ACCP, it must be atomic. Thus, there exists a sequence of atoms $(a_n)_{n \in \nn}$ such that $a_n < 1/2^n$ for every $n \in \nn$. Then the series $ \sum_{n=1}^\infty a_n$ converges to a certain limit $\ell \in (0,1)$ and, as a consequence, $s_n := \sum_{i=1}^n a_i \in M \cap (0,1)$ for every $n \in \nn$. Since the conductor of $M$ is nonempty, Proposition~\ref{prop:conductor of a PM} guarantees that the set $\widehat{M} \setminus M$ is bounded. Take $x \in M$ such that $x > 1 + \sup \widehat{M} \setminus M$. It follows from Proposition~\ref{prop:closure of a PM} that the inclusion $x - s_n \in \widehat{M}$ holds for every $n \in \nn$. As $M$ has nonempty conductor and $x - s_n > \sup \widehat{M} \setminus M$, Proposition~\ref{prop:conductor of a PM} ensures that $x - s_n \in M$. Consider the sequence of principal ideals $(x - s_n + M)_{n \in \nn}$ of~$M$. Since
	\[
		x - s_n = x - s_{n+1} + (s_{n+1} - s_n) = (x - s_{n+1}) + a_{n+1},
	\]
	the sequence $(x - s_n + M)_{n \in \nn}$ is an ascending chain of principal ideals of $M$. Because $x - s_{n+1} < x - s_n$, the ascending chain of principal ideals $(x - s_n + M)_{n \in \nn}$ does not stabilize. This contradicts that $M$ satisfies the ACCP, which completes the proof.
\end{proof}

As Corollary~\ref{cor:ACCP that is not BFM} and Example~\ref{ex:BF PM with 0 as a limit point} indicate, without the nonempty-conductor condition, none of the last two statements in Theorem~\ref{thm:strongly primary characterization when conductor non-empty} implies its predecessor. In addition, even inside the class of Puiseux monoids with nonempty conductor, neither being atomic nor being an FFM is equivalent to being a BFM (or satisfying the ACCP).

\begin{example}
	Consider the Puiseux monoid $M := \{0\} \cup \qq_{\ge 1}$. It is clear that the conductor of $M$ is nonempty. In addition, it follows from Theorem~\ref{thm:strongly primary characterization when conductor non-empty} that $M$ is a BFM. Note that $\mathcal{A}(M) = [1,2)$. However, $M$ is far from being an FFM; for instance, the formal sum $(1 + 1/n) + (x - 1 - 1/n)$ is a length-$2$ factorization in $\mathsf{Z}(x)$ for all $x \in (2,3] \cap \qq$ and $n \ge \big\lceil \frac{1}{x-2} \big\rceil$, which implies that $|\mathsf{Z}(x)| = \infty$ for all $x \in M_{>2}$.
\end{example}

\begin{example}
	Now consider the Puiseux monoid $M =  \langle 1/p \mid p \in \pp \rangle \cup \qq_{\ge 1}$. Since the monoid $\langle 1/p \mid p \in \pp \rangle$ is atomic by Theorem~\ref{thm:a class of ACCP monoids}, it is not hard to check that $M$ is also atomic. It follows from Proposition~\ref{prop:conductor of a PM} that $M$ has nonempty conductor. Since~$0$ is a limit point of~$M^\bullet$, Theorem~\ref{thm:strongly primary characterization when conductor non-empty} ensures that $M$ does not satisfy the ACCP.
\end{example}



\smallskip
\subsection{Finite Factorization Monoids}

Our next task is to introduce a class of Puiseux monoids that are FFMs. This class consists of all Puiseux monoids that can be generated by an increasing sequence of rationals.

\begin{definition}
	A Puiseux monoid $M$ is called \emph{increasing} (respectively \emph{decreasing}) if $M$ can be generated by an increasing (respectively decreasing) sequence. A Puiseux monoid is \emph{monotone} if it is increasing or decreasing.
\end{definition}

Not every Puiseux monoid is monotone, as the next example shows.

\begin{example} \label{ex:bounded PM that is neither decreasing nor increasing}
	Let $p_1, p_2, \dots$ be a strictly increasing enumeration of $\pp$. Consider the Puiseux monoid $M = \langle A \cup B \rangle$, where
	\[
		A = \bigg\{ \frac{1}{p_{2n}} \ \bigg{|} \ n \in \nn \bigg\} \ \text{ and } \ B = \bigg\{ \frac{p_{2n-1} - 1}{p_{2n-1}} \ \bigg{|} \ n \in \nn \bigg\}.
	\]
	It follows immediately that both $A$ and $B$ belong to $\mathcal{A}(M)$. So $M$ is atomic and $\mathcal{A}(M) = A \cup B$. Every generating set of $M$ must contain $A \cup B$ and so will have at least two limit points, namely, $0$ and $1$. Since every monotone sequence of rationals can have at most one limit point in the real line, we conclude that $M$ is not monotone.
\end{example}

The next proposition offers a first insight into the atomicity of increasing monoids.

\begin{prop} \label{prop:atoms of increasing monoids}
	Every increasing Puiseux monoid is atomic. Moreover, if $(r_n)_{n \in \nn}$ is an increasing sequence of positive rationals generating a Puiseux monoid $M$, then $\mathcal{A}(M) = \{r_n \mid r_n \notin \langle r_1, \dots, r_{n-1} \rangle\}$.
\end{prop}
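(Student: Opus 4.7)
The plan is to dispatch atomicity of $M$ quickly via Theorem~\ref{thm:BF sufficient condition}, and then to prove the two inclusions $\mathcal{A}(M) \subseteq A$ and $A \subseteq \mathcal{A}(M)$, where $A := \{r_n \mid r_n \notin \langle r_1, \dots, r_{n-1} \rangle\}$, by exploiting the monotonicity of $(r_n)_{n \in \nn}$ in an elementary way.

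For atomicity, the key observation is that since $(r_n)_{n \in \nn}$ is an increasing sequence of \emph{positive} rationals, every term satisfies $r_n \ge r_1 > 0$. Therefore each nonzero element of $M$, being a finite $\nn_0$-combination of the $r_n$'s with at least one positive coefficient, is itself bounded below by $r_1$. Hence $\inf M^\bullet \ge r_1 > 0$, so $0$ is not a limit point of $M^\bullet$, and Theorem~\ref{thm:BF sufficient condition} gives that $M$ is a BFM, and in particular atomic.

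To show $\mathcal{A}(M) \subseteq A$, take $a \in \mathcal{A}(M)$ and write $a = \sum_{i=1}^N \alpha_i r_i$ with $\alpha_i \in \nn_0$. Since $a$ is an atom, the total coefficient $\sum_{i=1}^N \alpha_i$ must equal $1$; otherwise the sum splits into two nonempty pieces, each a positive element of $M$, contradicting atomicity. Hence $a = r_j$ for some $j$, and we let $n := \min\{j : r_j = a\}$. Suppose for contradiction that $r_n \in \langle r_1, \dots, r_{n-1}\rangle$ and write $r_n = \sum_{i<n} \beta_i r_i$ with some $\beta_i > 0$. The case $\sum_{i<n} \beta_i = 1$ forces $r_n = r_i$ for some $i < n$, contradicting minimality of $n$, while $\sum_{i<n} \beta_i \ge 2$ gives a nontrivial decomposition of $a$, again contradicting atomicity. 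Hence $r_n \notin \langle r_1, \dots, r_{n-1}\rangle$ and $a \in A$.

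For the reverse inclusion $A \subseteq \mathcal{A}(M)$, let $r_n \in A$. Since $r_n \notin \langle r_1,\dots,r_{n-1}\rangle$ and the sequence is increasing, we must have $r_i < r_n$ for every $i < n$, because $r_i = r_n$ would place $r_n$ in the submonoid. Suppose for contradiction that $r_n = x+y$ for some $x, y \in M^\bullet$. Then $x, y < r_n$, and writing each of $x$ and $y$ as an $\nn_0$-combination of the $r_i$'s, every generator appearing with positive coefficient is bounded above by $\max(x,y) < r_n$. By monotonicity this forces its index to be strictly less than $n$. Summing the two representations expresses $r_n$ as an element of $\langle r_1,\dots,r_{n-1}\rangle$, a contradiction. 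Thus $r_n \in \mathcal{A}(M)$. The main delicacy throughout is that ``increasing'' may allow equal consecutive terms; the argument handles this uniformly by working with the minimal index representing each atom value and by extracting the strict inequality $r_i < r_n$ for $i < n$ directly from the hypothesis $r_n \in A$.
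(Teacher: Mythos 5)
Your proof is correct and follows essentially the same route as the paper: atomicity via the lower bound $r_1>0$ on $M^\bullet$ together with Theorem~\ref{thm:BF sufficient condition}, and the atom description by using monotonicity to force any summand of $r_n$ to lie in $\langle r_1,\dots,r_{n-1}\rangle$. The only difference is presentational: you carry out both inclusions explicitly and handle repeated terms via the minimal index, whereas the paper passes to the strictly increasing enumeration of $A$ and leaves the inclusion $\mathcal{A}(M)\subseteq A$ implicit.
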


\begin{proof}
	The fact that $M$ is atomic follows from observing that $r_1$ is a lower bound for~$M^\bullet$ and so $0$ is not a limit point of $M^\bullet$. To prove the second statement, set
	\[
		A = \{r_n \mid r_n \notin \langle r_1, \dots, r_{n-1} \rangle\}.
	\]
	Note that $A$ is finite if and only if $M$ is finitely generated, in which case it is clear that $A = \mathcal{A}(M)$. Then suppose that $A$ is not finite. List the elements of $A$ as a strictly increasing sequence, namely, $(a_n)_{n \in \nn}$. Note that $M = \langle A \rangle$ and $a_n \notin \langle a_1, \dots, a_{n-1} \rangle$ for any $n \in \nn$. Since $a_1 = \min M^\bullet$, we have that $a_1 \in \mathcal{A}(M)$. Take $n \in \ldb 2, |A| \rdb$. Because~$(a_n)_{n \in \nn}$ is a strictly increasing sequence and $a_n \notin \langle a_1,\dots, a_{n-1} \rangle$, one finds that~$a_n$ cannot be written as a sum of elements in $M$ in a nontrivial manner. Hence $a_n \in \mathcal{A}(M)$ for every $n \in \nn$, and one can conclude that $\mathcal{A}(M) = A$.
\end{proof}

Let us collect another characterization of finitely generated Puiseux monoids, now in terms of monotonicity.

\begin{prop}
	A nontrivial Puiseux monoid $M$ is finitely generated if and only if~$M$ is both increasing and decreasing.
\end{prop}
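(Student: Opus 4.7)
The plan is to prove both directions directly. For the forward implication, given a finite generating set $\{a_1, \dots, a_k\}$ of $M$ with $a_1 \le \cdots \le a_k$, the eventually-constant sequence $a_1, a_2, \dots, a_k, a_k, a_k, \dots$ is non-decreasing and clearly still generates $M$, so $M$ is increasing; its reversal furnishes a non-increasing generating sequence, so $M$ is decreasing as well.

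For the reverse implication, assume $M$ is generated by a non-decreasing sequence $(r_n)_{n \in \nn}$ and simultaneously by a non-increasing sequence $(s_n)_{n \in \nn}$. Proposition~\ref{prop:atoms of increasing monoids} ensures that $M$ is atomic, so it suffices to show that $\mathcal{A}(M)$ is finite, for then $M = \langle \mathcal{A}(M) \rangle$ will be finitely generated. An atom $a$ admits no nontrivial decomposition into positive elements of $M$, so $a$ must literally appear as a term of any generating sequence of $M$. Therefore $\mathcal{A}(M) \subseteq \{r_n : n \in \nn\}$ and $\mathcal{A}(M) \subseteq \{s_n : n \in \nn\}$ simultaneously.

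The crux is to deduce finiteness of $\mathcal{A}(M)$ from this double containment. Suppose, for a contradiction, that $\mathcal{A}(M)$ is infinite; then, as an infinite linearly ordered subset of $\qq$, it must contain either an infinite strictly increasing chain or an infinite strictly decreasing chain. If $(a_k)_{k \in \nn}$ is a strictly decreasing chain in $\mathcal{A}(M)$, pick indices $n_k \in \nn$ with $a_k = r_{n_k}$; since $(r_n)$ is non-decreasing, the strict inequality $r_{n_k} = a_k > a_{k+1} = r_{n_{k+1}}$ forces $n_k > n_{k+1}$, producing an infinite strictly decreasing sequence in $\nn$, which is impossible. The symmetric argument using $(s_n)$ rules out an infinite strictly increasing chain in $\mathcal{A}(M)$. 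Hence $\mathcal{A}(M)$ is finite, completing the proof.

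The only non-routine ingredient is the elementary order-theoretic observation that any infinite subset of $\qq$ contains an infinite strictly monotone chain (equivalently, a linear order that is both well-ordered and dually well-ordered is finite); everything else is an unpacking of the definition of an atom and the remark that the indices tracking an atom through a non-decreasing generating sequence must themselves vary monotonically in the opposite direction.
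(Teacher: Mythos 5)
Your proof is correct and follows essentially the same route as the paper's: the forward direction is handled by padding a finite generating set into an eventually constant monotone sequence, and the reverse direction reduces to showing $\mathcal{A}(M)$ is finite by combining atomicity of increasing monoids (Proposition~\ref{prop:atoms of increasing monoids}) with the fact that every atom must appear in any generating sequence. The only cosmetic difference is the final contradiction: the paper shows an infinite $\mathcal{A}(M)$ would both lack a largest element (from the increasing generators) and possess one (from the decreasing generators), while you invoke the dichotomy that an infinite subset of $\qq$ contains an infinite strictly monotone chain and rule out each direction against the appropriate generating sequence; these are the same elementary order argument in different clothing.
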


\begin{proof}
	The direct implication is obvious. For the reverse implication, suppose that~$M$ is a nontrivial Puiseux monoid that is increasing and decreasing. Proposition~\ref{prop:atoms of increasing monoids} implies that $M$ is atomic and, moreover, $\mathcal{A}(M)$ is the underlying set of an increasing sequence. Assume for a contradiction that $|\mathcal{A}(M)| = \infty$. Then $\mathcal{A}(M)$ does not contain a largest element. As $M$ is decreasing, there exists $D := \{d_n \mid n \in \nn \} \subset \qq_{> 0}$ such that $d_1 > d_2 > \cdots$ and $M = \langle D \rangle$. Let $m = \min \{n \in \nn \mid d_n \in \mathcal{A}(M)\}$, which must exist because $\mathcal{A}(M) \subseteq D$. Now the minimality of $m$ implies that $d_m$ is the largest element of $\mathcal{A}(M)$, which is a contradiction. Hence $\mathcal{A}(M)$ is finite. As $M$ is atomic, it must be finitely generated.
\end{proof}

\begin{definition}
	A Puiseux monoid is \emph{strongly increasing} if it can be generated by a sequence $(s_n)_{n \in \nn}$ such that $\lim_{n \to \infty} s_n = \infty$.
\end{definition}

Not every increasing Puiseux monoid is strongly increasing; to see this, consider the monoid $\langle \frac{p-1}{p} \mid p \in \pp \rangle$. We will show that the strongly increasing property is hereditary. First, we argue the following lemma.

\begin{lemma} \label{lem:no limit point implies strongly increasing subset}
	Let $R$ be an infinite subset of $\qq_{\ge 0}$. If $R$ does not contain any limit points, then it is the underlying set of an increasing and unbounded sequence.
\end{lemma}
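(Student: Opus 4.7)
The plan is to exploit the Bolzano--Weierstrass theorem to show that $R$ is locally finite in $\qq_{\ge 0}$, and then to enumerate it in increasing order by taking successive minima. Under the hypothesis that $R$ contains no limit points (i.e., has no accumulation points in $\rr$), the key preliminary fact is that $R \cap [0, N]$ is finite for every $N \in \nn$: otherwise, being an infinite bounded subset of $\rr$, it would have an accumulation point in $[0, N]$, and this would also be an accumulation point of $R$, contradicting the hypothesis.

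Next, I would observe that the infinite decomposition $R = \bigcup_{N \in \nn} (R \cap [0, N])$ together with the finiteness of each piece forces $R$ to be unbounded. Then I would define a sequence recursively by setting $r_1 := \min R$ and $r_{n+1} := \min \bigl( R \setminus \{r_1, \dots, r_n\} \bigr)$ for $n \ge 1$; each minimum exists because the set in question, being a subset of some $R \cap [0, N]$ with $N$ chosen larger than $\inf (R \setminus \{r_1, \dots, r_n\}) + 1$, is finite and nonempty (the latter holding since $R$ is infinite). By construction, $(r_n)_{n \in \nn}$ is a strictly increasing sequence contained in $R$.

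To conclude, I would verify that the sequence $(r_n)_{n \in \nn}$ exhausts $R$ and tends to infinity. If $(r_n)_{n \in \nn}$ were bounded, say by $N$, then $\{r_n \mid n \in \nn\}$ would be an infinite subset of the finite set $R \cap [0, N]$, a contradiction; hence $r_n \to \infty$. Given any $r \in R$, choose $n$ with $r_n > r$; then $r \in R \cap [0, r_n]$, which by construction of the $r_i$ via successive minima must equal $\{r_1, \dots, r_n\}$, so $r$ appears in the sequence.

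There is no genuine obstacle here: once the hypothesis is correctly unpacked as saying that $R$ is a discrete subset of $\rr$, the argument reduces to the standard fact that every closed, locally finite infinite subset of $[0, \infty)$ can be enumerated as a strictly increasing sequence diverging to $+\infty$. The only subtle point is the precise interpretation of ``contains no limit points,'' which in view of the earlier usage in Example~\ref{ex:bounded PM that is neither decreasing nor increasing} refers to limit points of $R$ in $\rr$ (not merely those lying in $R$).
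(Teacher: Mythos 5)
Your proof is correct and follows essentially the same route as the paper's: both establish that $R\cap[0,N]$ is finite for every $N$ (via the fact that an infinite bounded set has an accumulation point), then enumerate $R$ by successive minima and deduce unboundedness from the infiniteness of $R$. Your write-up merely spells out the exhaustion and divergence steps that the paper leaves implicit.
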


\begin{proof}
	For any $r \in R$ and $S \subseteq R$, the interval $[0,r]$ must contain only finitely many elements of $S$; otherwise there would be a limit point of $S$ in $[0,r]$. Therefore every nonempty subset of $R$ has a minimum element. So the sequence $(r_n)_{n \in \nn}$ recursively defined by $r_1 = \min R$ and $r_n = \min R \! \setminus \! \{r_1, \dots, r_{n-1}\}$ is strictly increasing and has $R$ as its underlying set. Since $R$ is infinite and contains no limit points, the increasing sequence $(r_n)_{n \in \nn}$ must be unbounded. Hence $R$ is the underlying set of the increasing and unbounded sequence $(r_n)_{n \in \nn}$.
\end{proof}

\begin{prop} \label{prop:strongly increasing iff super increasing}
	A nontrivial Puiseux monoid $M$ is strongly increasing if and only if every submonoid of $M$ is increasing.
\end{prop}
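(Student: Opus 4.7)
The plan is to characterize, for any submonoid $N$ of $M$, when $N$ is increasing in terms of its atom set: $N$ is increasing if and only if $N$ is atomic and $\mathcal{A}(N)$ admits an increasing enumeration, equivalently, $\mathcal{A}(N)$ contains no strictly decreasing infinite subsequence. With this reformulation both directions reduce to controlling the atoms of submonoids of $M$.

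For the forward direction, suppose $M$ is generated by a sequence $(s_n)_{n \in \mathbb{N}}$ with $s_n \to \infty$. Then for every $R > 0$ only finitely many $s_n$ are at most $R$, and in any representation $x = \sum_n c_n s_n$ with $x \le R$ those generators appear with bounded coefficients; hence $M \cap [0, R]$ is finite. In particular $\inf M^\bullet > 0$, so every submonoid $N \subseteq M$ satisfies $\inf N^\bullet > 0$ and is atomic by Theorem~\ref{thm:BF sufficient condition}. Moreover $\mathcal{A}(N) \cap [0, R] \subseteq M \cap [0, R]$ is finite for every $R > 0$, so $\mathcal{A}(N)$ has no finite limit point. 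Lemma~\ref{lem:no limit point implies strongly increasing subset} (together with the finite case, handled trivially) then lists $\mathcal{A}(N)$ as an increasing sequence, and since $N$ is atomic this sequence generates $N$; hence $N$ is increasing.

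For the reverse direction I argue contrapositively. If $M$ is not atomic, then $M$ itself is a non-increasing submonoid by Proposition~\ref{prop:atoms of increasing monoids}. Assume then that $M$ is atomic and not strongly increasing. A finite $\mathcal{A}(M) = \{a_1, \dots, a_k\}$ would make $M$ strongly increasing via the padded sequence $a_1, \dots, a_k, a_1, 2a_1, 3a_1, \dots$, and an infinite $\mathcal{A}(M)$ without a finite limit point would do so via Lemma~\ref{lem:no limit point implies strongly increasing subset}. So $\mathcal{A}(M)$ must contain a strictly monotone subsequence of atoms $(a_n)$ converging to some finite $L$. If $(a_n)$ is strictly decreasing, set $N := \langle a_n \mid n \in \mathbb{N} \rangle$; each $a_n$ remains an atom of $N$, and since $\mathcal{A}(N)$ is contained in any generating set of the atomic reduced monoid $N$, we get $\mathcal{A}(N) = \{a_n\}$, which contains a strictly decreasing infinite subsequence and thus admits no increasing enumeration. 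Hence $N$ is not increasing.

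The delicate case is when the only monotone subsequences of atoms accumulate from below, say $a_n \nearrow L$, because now the submonoid $\langle a_n \rangle$ is itself increasing. The remedy is to translate and cap: pick $c \in M$ with $c > L$ (available since $\{ka_1 \mid k \in \mathbb{N}\} \subseteq M$ is unbounded) and put $N := \langle a_n + c, 2c \mid n \in \mathbb{N} \rangle \subseteq M$. The inequality $c > L$ forces each $a_n + c$ to be an atom of $N_0 := \langle a_n + c \mid n \in \mathbb{N} \rangle$, since any representation with $\sum e_k \ge 2$ has value at least $2c > L + c > a_n + c$. A case split on $E := \sum e_n \in \{0, 1, 2, \ge 3\}$ applied to $2c = \sum e_n(a_n + c)$ shows $2c \notin N_0$, using only $c > 0$, $c > a_n$, and $a_n > 0$. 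Routine size comparisons then verify that $2c$ and each $a_n + c$ remain atoms after adjoining $2c$, so $\mathcal{A}(N) = \{a_n + c \mid n \in \mathbb{N}\} \cup \{2c\}$; since infinitely many atoms $a_n + c$ lie strictly below the atom $2c$, no increasing enumeration of $\mathcal{A}(N)$ can exist, and $N$ is not increasing. The main obstacle is exactly this construction: when the atoms of $M$ accumulate only from below, one has to manufacture a non-increasing submonoid by shifting the accumulating atoms into the interior of $M$ and adding the explicit larger generator $2c$ as a cap.
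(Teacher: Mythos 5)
Your argument is correct, and the forward direction is essentially the paper's: show that $M \cap [0,R]$ is finite for every $R$, deduce that no submonoid has a finite limit point, and invoke Lemma~\ref{lem:no limit point implies strongly increasing subset} (the paper applies the lemma to $N$ itself rather than to $\mathcal{A}(N)$, an immaterial difference). The reverse direction, however, is genuinely different. The paper first reduces to the case where $M$ is increasing, so that $\mathcal{A}(M)$ is the underlying set of an increasing sequence with a single finite limit $\ell>0$; it then builds $N$ from elements of $M$ clustered near the two limit points $2\ell$ and $3\ell$, and the resulting atom set, having two limit points, cannot be monotone. You instead case-split on the direction in which atoms of $M$ accumulate: a strictly decreasing sequence of atoms already generates a non-increasing submonoid, while for atoms accumulating only from below you translate by a large $c\in M$ and adjoin the cap $2c$, so that infinitely many atoms of $N$ sit below the atom $2c$. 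Both constructions are valid; the paper's is more uniform (one construction covers both accumulation patterns), while yours avoids the preliminary reduction to $M$ increasing and makes the obstruction to an increasing enumeration very explicit. One caveat: your opening reformulation asserts that admitting an increasing enumeration is \emph{equivalent} to containing no strictly decreasing infinite subsequence, and that equivalence is false --- the set $\{1-1/n \mid n\in\nn\}\cup\{2\}$ has no strictly decreasing infinite subsequence yet admits no increasing enumeration. Fortunately you never use the false direction: in the decreasing case you use the (true) implication that a strictly decreasing infinite atom set blocks an increasing enumeration, and in the delicate case you correctly argue via ``infinitely many atoms below a fixed atom,'' which is precisely the failure mode your stated equivalence would have missed. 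I would simply delete the ``equivalently'' clause.
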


\begin{proof}
	If $M$ is finitely generated, then the statement of the theorem follows immediately. So we will assume for the rest of this proof that $M$ is not finitely generated.
	
	For the direct implication, suppose that $M$ is strongly increasing. Let us start by verifying that $M$ does not have any real limit points. By Proposition~\ref{prop:atoms of increasing monoids}, the monoid~$M$ is atomic. As $M$ is atomic and nonfinitely generated, $|\mathcal{A}(M)| = \infty$. Let $(a_n)_{n \in \nn}$ be an increasing sequence whose underlying set is $\mathcal{A}(M)$. Since $M$ is strongly increasing and $\mathcal{A}(M)$ is an infinite subset contained in every generating set of $M$, the sequence $(a_n)_{n \in \nn}$ is unbounded. Therefore, for every $r \in \rr$, the interval $[0,r]$ contains only finitely many elements of $(a_n)_{n \in \nn}$. There is no loss in assuming that such elements are $a_1, \dots, a_k$ for some $k \in \nn$. Since $\langle a_1, \dots, a_k \rangle \cap [0,r]$ is finite, $M \cap [0,r]$ is also finite. Because $|[0,r] \cap M| < \infty$ for all $r \in \rr$, it follows that $M$ does not have any limit points in $\rr$.
	
	Now suppose that $N$ is a nontrivial submonoid of $M$. Being a subset of $M$, the monoid $N$ cannot have any limit points in $\rr$. Thus, by Lemma~\ref{lem:no limit point implies strongly increasing subset}, the set $N$ is the underlying set of an increasing and unbounded sequence of rationals. Hence $N$ is a strongly increasing Puiseux monoid.
	
	For the reverse implication, suppose that $M$ is not strongly increasing. We will check that, in this case, $M$ contains a submonoid that is not increasing. If $M$ is not increasing, then $M$ is a submonoid of itself that is not increasing. Suppose, therefore, that $M$ is increasing. By Proposition~\ref{prop:atoms of increasing monoids}, the monoid $M$ is atomic, and we can list its atoms increasingly. Let $(a_n)_{n \in \nn}$ be an increasing sequence with underlying set $\mathcal{A}(M)$. Because $M$ is not strongly increasing, there exists $\ell \in \rr_{> 0}$ that is the limit of the sequence $(a_n)_{n \in \nn}$. Since $\ell$ is a limit point of $M$, which is closed under addition, $2\ell$ and $3\ell$ must be limit points of~$M$. Let $(b_n)_{n \in \nn}$ and $(c_n)_{n \in \nn}$ be sequences in $M$ having infinite underlying sets such that $\lim_{n \to \infty} b_n = 2\ell$ and $\lim_{n \to \infty} c_n = 3\ell$. Furthermore, assume that for every $n \in \nn$,
	\begin{equation}
		|b_n - 2\ell| < \frac{\ell}4 \ \text{ and } \ |c_n - 3\ell| < \frac{\ell}4. \label{eq:decreasing is not hereditary}
	\end{equation}
	Let $N$ be the submonoid of $M$ generated by $A := \{b_n,c_n \mid n \in \nn\}$. Note that~$A$ contains at least two limit points. Let us verify that $N$ is atomic with $\mathcal{A}(N) = A$. The inequalities \eqref{eq:decreasing is not hereditary} immediately imply that $A$ is bounded from above by $3\ell + \ell/4$. On the other hand, proving that $\mathcal{A}(N) = A$ amounts to showing that $A$ and $A+A$ are disjoint. To verify this, it suffices to note that
	\begin{align*}
		\inf (A + A) &= \inf \big\{b_m + b_n, b_m + c_n, c_m + c_n \mid m,n \in \nn \big\} \\
						&\ge \min \bigg\{4\ell - \frac{\ell}{2}, \ 5\ell - \frac{\ell}{2}, \ 6\ell - \frac{\ell}{2} \bigg\} > 3\ell + \frac{\ell}{4} \ge \sup A.
	\end{align*}
	Thus, $\mathcal{A}(N) = A$. Since every increasing sequence has at most one limit point in $\rr$, the set $A$ cannot be the underlying set of an increasing rational sequence. As every generating set of $N$ contains $A$, we conclude that $N$ is not an increasing Puiseux monoid, which completes the proof.
\end{proof}

The next theorem provides a large class of Puiseux monoids that are FFMs.

\begin{theorem} \label{thm:increasing PM of Archimedean fields are FF}
	Every increasing Puiseux monoid is an FFM.
\end{theorem}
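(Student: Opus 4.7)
My plan is to combine Theorem~\ref{thm:BF sufficient condition} with a Bolzano--Weierstrass argument that controls how atoms can accumulate inside a factorization. First, if $M$ is finitely generated, then by Proposition~\ref{prop:fg PM are NM} it is isomorphic to a numerical monoid, which is trivially an FFM, so I may assume $M$ is not finitely generated. By Proposition~\ref{prop:atoms of increasing monoids}, $M$ is atomic and I may list $\mathcal{A}(M)$ as a strictly increasing sequence $a_1 < a_2 < \cdots$. Since $a_1>0$ is a lower bound for $M^\bullet$, Theorem~\ref{thm:BF sufficient condition} yields that $M$ is a BFM. Now fix $x\in M$; because $|\mathsf{L}(x)|<\infty$, it suffices to show that for every $N\in\mathsf{L}(x)$ the set $\mathsf{Z}_N(x):=\{z\in\mathsf{Z}(x)\mid |z|=N\}$ is finite.

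Every $z\in \mathsf{Z}_N(x)$ encodes a multiset of $N$ atoms summing to $x$, and each atom appearing in $z$ satisfies $a\le x$. If either $(a_n)$ is unbounded, or $\ell:=\lim_{n\to\infty}a_n$ satisfies $\ell>x$, then only finitely many atoms are $\le x$, so $\mathsf{Z}_N(x)$ is finite and we are done. The remaining and genuinely delicate case is when $(a_n)$ is bounded with $\ell\le x$. Here I would argue by contradiction: assume $\mathsf{Z}_N(x)$ is infinite and choose pairwise distinct $z^{(k)}\in\mathsf{Z}_N(x)$ for $k\in\nn$, each of which I record by sorting its $N$ atoms in non-increasing order, $a_{i_1^{(k)}}\ge\cdots\ge a_{i_N^{(k)}}$. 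Since every coordinate lies in the compact interval $[a_1,\ell]$, Bolzano--Weierstrass lets me pass to a subsequence along which $a_{i_j^{(k)}}\to b_j\in[a_1,\ell]$ for every $j$, with $b_1+\cdots+b_N=x$.

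The crucial observation is that $\ell$ is the unique accumulation point of $\mathcal{A}(M)$ in $\rr$, because the atoms form a strictly increasing sequence with limit $\ell$. Hence, for each index $j$ with $b_j<\ell$, the sequence $(a_{i_j^{(k)}})_{k\in\nn}$ must be eventually constant and equal to the atom $b_j$. The pairwise distinctness of the $z^{(k)}$ then forces at least one index $j$ to have $b_j=\ell$. Setting $m:=|\{j:b_j=\ell\}|\ge 1$ and $S:=\sum_{j:b_j<\ell}b_j$, for all sufficiently large $k$ the identity $\sum_{j=1}^N a_{i_j^{(k)}}=x$ together with $a_{i_j^{(k)}}=b_j$ for $j$ with $b_j<\ell$ yields
\[
\sum_{j:\,b_j=\ell} a_{i_j^{(k)}}\;=\;x-S\;=\;m\ell.
\]
But the left-hand side is a sum of $m$ atoms each strictly less than $\ell$, hence strictly less than $m\ell$, a contradiction. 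Therefore $\mathsf{Z}_N(x)$ is finite, and $\mathsf{Z}(x)$ is a finite union of finite sets over $N\in\mathsf{L}(x)$, proving that $M$ is an FFM.

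The main obstacle is precisely the bounded-atoms subcase in the second paragraph: one must prevent infinitely many multisets of a fixed size from summing to $x$ when infinitely many atoms lie below $x$. The Bolzano--Weierstrass compactness argument, coupled with the fact that the \emph{only} accumulation point of $\mathcal{A}(M)$ is $\ell$, is what forces the would-be accumulation of factorizations to pile up against $\ell$ in a way that is numerically impossible.
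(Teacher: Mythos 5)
Your argument is correct, and it reaches the conclusion by a genuinely different route than the paper. The paper argues globally by contradiction: it sets $S=\{x\in M : |\mathsf{Z}(x)|=\infty\}$, takes $s=\inf S$, chooses $x\in S$ with $s\le x< s+\epsilon$ for $\epsilon<\min M^\bullet$, observes that no atom $a$ can occur in infinitely many factorizations of $x$ (else $x-a$ would lie in $S$ strictly below the infimum), extracts infinitely many factorizations of a common length $\ell$, and then compares a fixed factorization $z_0$ with atoms bounded by $A$ against one all of whose atoms exceed $A$, obtaining $x\le A\ell< x$. You instead work locally at a fixed $x$: after using Theorem~\ref{thm:BF sufficient condition} to reduce to factorizations of a fixed length $N$, you dispose of the case of finitely many atoms below $x$ outright and, in the remaining case, run Bolzano--Weierstrass on the sorted atom-tuples, exploiting that $\ell=\lim a_n$ is the \emph{only} accumulation point of $\mathcal{A}(M)$ and that every atom is strictly below $\ell$ (the supremum of the underlying set of a strictly increasing sequence is not attained), to force the identity $m\ell=\sum_{j:\,b_j=\ell}a_{i_j^{(k)}}$ with $m\ge 1$ atoms each strictly less than $\ell$ on the right, a contradiction. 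Both proofs hinge on the same structural fact supplied by Proposition~\ref{prop:atoms of increasing monoids}, namely that $\mathcal{A}(M)$ is the underlying set of a strictly increasing sequence; the paper's extremal argument is shorter and handles the bounded and unbounded atom configurations uniformly, while your compactness argument is more self-contained analytically, makes the role of the unique accumulation point $\ell$ explicit, and in fact proves the slightly sharper local statement that no element admits infinitely many factorizations of any single length.
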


\begin{proof}
	Let $M$ be an increasing Puiseux monoid. Since $0$ is not a limit point of $M^\bullet$, Theorem~\ref{thm:BF sufficient condition} ensures that $M$ is a BFM. Suppose, by way of contradiction, that $M$ is not an FFM. Consider the set $S = \{x \in M : |\mathsf{Z}(x)| = \infty\}$. As $M$ is not an FFM, $S$ is not empty. In addition, $s = \inf S \neq 0$ because $0$ is not a limit point of $M^\bullet$. Note that~$M^\bullet$ contains a minimum element because $M$ is increasing. Set $m = \min M^\bullet$ and fix $\epsilon \in (0,m)$. Now take $x \in S$ such that $s \le x < s + \epsilon$. Each $a \in \mathcal{A}(M)$ appears in only finitely many factorizations of $x$; otherwise $x - a$ would be an element of $S$ satisfying $x - a < s$, which contradicts $s = \inf S$. Since $\mathsf{L}(x)$ is finite, there exists $\ell \in \mathsf{L}(x)$ such that the set
	\[
		Z = \{z \in \mathsf{Z}(x) : |z| = \ell\}
	\]
	has infinite cardinality. Fix $z_0 = a_1 \dots a_\ell \in \mathsf{Z}(x)$, where $a_1, \dots, a_\ell \in \mathcal{A}(M)$, and set $A = \max\{a_1, \dots, a_\ell\}$. As every atom appears in only finitely many factorizations in~$Z$ and $|Z| = \infty$, there exists $z_1 = b_1 \cdots b_\ell \in Z$, where $b_1, \dots, b_\ell \in \mathcal{A}(M)$ and $b_n > A$ for every $n \in \ldb 1,\ell \rdb$ (here we are using the fact that $\mathcal{A}(M)$ is the underlying set of an increasing sequence). But now, if $\pi \colon \mathsf{Z}(M) \to M$ is the factorization homomorphism of $M$,
	\[
		x = \pi(z_0) = \sum_{n=1}^\ell a_n \le A\ell < \sum_{n=1}^\ell b_n = \pi(z_1) = x,
	\]
	which is a contradiction. Hence $M$ is an FFM.
\end{proof}

On the other hand, the converse of Theorem~\ref{thm:increasing PM of Archimedean fields are FF} does not hold; the following example sheds some light upon this observation.

\begin{example} \label{ex:FF PM that is not increasing}
	Let $(p_n)_{n \in \nn}$ be a strictly increasing sequence of primes, and consider the Puiseux monoid defined as follows:
	\begin{equation} \label{eq:FF does not implies increasing}
		M = \langle A \rangle, \ \text{where} \ A = \bigg\{\frac{p_{2n}^2 + 1}{p_{2n}}, \frac{p_{2n+1} + 1}{p_{2n+1}} \ \bigg{|} \ n \in \nn \bigg\}.
	\end{equation}
	Since $A$ is an unbounded subset of $\rr$ having $1$ as a limit point, it cannot be increasing. In addition, as $\mathsf{d}(a) \neq \mathsf{d}(a')$ for all $a,a' \in A$ such that $a \neq a'$, every element of $A$ is an atom of $M$. As each generating set of $M$ must contain $A$ (which is not increasing), $M$ is not an increasing Puiseux monoid.
	
	To verify that $M$ is an FFM, fix $x \in M$ and then take $D_x$ to be the set of primes dividing $\mathsf{d}(x)$. Now choose $N \in \nn$ with $N > \max \{x, \mathsf{d}(x)\}$. For each $a \in A$ with $\mathsf{d}(a) > N$, the number of copies $\alpha$ of the atom $a$ appearing in any factorization of $x$ must be a multiple of $\mathsf{d}(a)$ because $\mathsf{d}(a) \notin D_x$. Then $\alpha = 0$; otherwise, we would obtain that $x \ge \alpha a \ge \mathsf{d}(a)a > \mathsf{d}(a) > x$. Thus, if an atom $a$ divides $x$ in $M$, then $\mathsf{d}(a) \le N$. As a result, only finitely many elements of $\mathcal{A}(M)$ divide $x$ in $M$, and so $|\mathsf{Z}(x)| < \infty$. Hence $M$ is an FFM that is not increasing.
\end{example}

\begin{remark}
	For an ordered field $F$, a \emph{positive monoid} of $F$ is an additive submonoid of the nonnegative cone of $F$. As for Puiseux monoids, a positive monoid is \emph{increasing} provided that it can be generated by an increasing sequence. Increasing positive monoids are FFMs~\cite[Theorem~5.6]{fG19}, but the proof of this general version of Theorem~\ref{thm:increasing PM of Archimedean fields are FF} is much more involved.
\end{remark}

\smallskip
\subsection{Factorial, Half-Factorial, and Other-Half-Factorial Monoids}

The only Puiseux monoid that is a UFM (or even an HFM) is, up to isomorphism, $(\nn_0,+)$. The following proposition formalizes this observation.

\begin{prop} \label{prop:HF PM characterization}
	For a nontrivial atomic Puiseux monoid $M$, the following statements are equivalent.
	\begin{enumerate}
		\item $M$ is a UFM.
		\vspace{2pt}
		
		\item $M$ is a HFM.
		\vspace{2pt}
		
		\item $M \cong (\nn_0,+)$.
		\vspace{2pt}
		
		\item $M$ contains a prime element.
	\end{enumerate}
\end{prop}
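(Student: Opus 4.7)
The plan is to establish the cycle $(1) \Rightarrow (2) \Rightarrow (3) \Rightarrow (4) \Rightarrow (1)$. Two implications are routine: $(1) \Rightarrow (2)$ holds because a unique factorization has a single length, while $(3) \Rightarrow (4)$ holds because the generator of $\nn_0$ is prime and primality is preserved by monoid isomorphism. The substantive work therefore lies in $(2) \Rightarrow (3)$ and $(4) \Rightarrow (1)$, and the common trick for both will be to manufacture, from a given atom $a$ of $M$, the positive integer $\mathsf{d}(a) \cdot a = \mathsf{n}(a)$ and use it to compare factorizations.

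For $(2) \Rightarrow (3)$, I would argue that the HFM condition forces $|\mathcal{A}(M)| \le 1$. Given two atoms $a_1, a_2 \in \mathcal{A}(M)$, the equality
\[
	(\mathsf{n}(a_2)\mathsf{d}(a_1)) \cdot a_1 = \mathsf{n}(a_1)\mathsf{n}(a_2) = (\mathsf{n}(a_1)\mathsf{d}(a_2)) \cdot a_2
\]
exhibits two factorizations of $\mathsf{n}(a_1)\mathsf{n}(a_2) \in M$ of respective lengths $\mathsf{n}(a_2)\mathsf{d}(a_1)$ and $\mathsf{n}(a_1)\mathsf{d}(a_2)$. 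Equating these lengths, as HFM demands, yields $a_1 = a_2$. Since $M$ is nontrivial and atomic, $\mathcal{A}(M)$ must be a singleton $\{a\}$, and consequently $M = \langle a \rangle \cong \nn_0$.

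For $(4) \Rightarrow (1)$, let $p \in M$ be prime. A short standard argument shows that primes in a reduced cancellative monoid are atoms, and an easy induction extends primality to sums: if $p \mid_M x_1 + \cdots + x_n$, then $p \mid_M x_i$ for some $i$. For any $a \in \mathcal{A}(M)$, the identity
\[
	(\mathsf{n}(a)\mathsf{d}(p)) \cdot p = \mathsf{n}(p)\mathsf{n}(a) = (\mathsf{n}(p)\mathsf{d}(a)) \cdot a
\]
shows that $p$ divides a sum of $\mathsf{n}(p)\mathsf{d}(a)$ copies of $a$ in $M$; the extended primality property then yields $p \mid_M a$, and since $a$ is an atom of the reduced monoid $M$, it follows that $a = p$. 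Hence $\mathcal{A}(M) = \{p\}$ and $M \cong \nn_0$, which is a UFM, closing the cycle. I do not anticipate any genuine obstacle — everything reduces to the clever choice of common positive integer multiple together with the routine facts that primes in reduced cancellative monoids are atoms and that primality propagates to finite sums.
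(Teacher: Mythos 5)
Your proposal is correct and follows essentially the same route as the paper: the implication $(2) \Rightarrow (3)$ uses the identical pair of factorizations of $\mathsf{n}(a_1)\mathsf{n}(a_2)$ with lengths $\mathsf{n}(a_2)\mathsf{d}(a_1)$ and $\mathsf{n}(a_1)\mathsf{d}(a_2)$, and your prime-element step rests on the same observation that $p \mid_M \mathsf{n}(p)\mathsf{d}(a)\, a$ forces $p \mid_M a$ and hence $a = p$. The only difference is cosmetic: you arrange the implications in a single cycle, whereas the paper proves $(3) \Rightarrow (1) \Rightarrow (2) \Rightarrow (3)$ and $(3) \Leftrightarrow (4)$ separately.
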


\begin{proof}
	Clearly, (3) $\Rightarrow$ (1) $\Rightarrow$ (2). To argue (2) $\Rightarrow$ (3), assume that $M$ is an HFM. Since~$M$ is an atomic nontrivial Puiseux monoid, $\mathcal{A}(M)$ is not empty. Let $a_1$ and~$a_2$ be two atoms of $M$. Then $z_1 := \mathsf{n}(a_2) \mathsf{d}(a_1) a_1$ and $z_2 :=  \mathsf{n}(a_1) \mathsf{d}(a_2) a_2$ are two factorizations of the element $\mathsf{n}(a_1) \mathsf{n}(a_2) \in M$. As $M$ is an HFM, it follows that $|z_1| = |z_2|$ and so $\mathsf{n}(a_2) \mathsf{d}(a_1) = \mathsf{n}(a_1)  \mathsf{d}(a_2)$. Then $a_1 = a_2$, which implies that $|\mathcal{A}(M)| = 1$. As a consequence, $M \cong (\nn_0,+)$.
	
	Because (3) $\Rightarrow$ (4) holds trivially, we only need to argue (4) $\Rightarrow$ (3). Fix a prime element $p \in M$ and take $a \in \mathcal{A}(M)$. Since $p \mid_M \mathsf{n}(p) \mathsf{d}(a) a$, one finds that $p \mid_M a$. This, in turn, implies that $a = p$. Hence $\mathcal{A}(M) = \{p\}$, and so $M \cong (\nn_0,+)$.
\end{proof}

\begin{example}
	For general monoids, the property of being an HFM is strictly weaker than that of being a UFM. For instance, the submonoid $\langle (1,n) \mid n \in \nn \rangle$ of $\nn_0^2$ is an HFM that is not a UFM (see \cite[Propositions~5.1 and~5.4]{fG19e} for more details).
\end{example}

A dual notion of being an HFM was introduced in~\cite{CS11} by Coykendall and Smith.

\begin{definition}
	An atomic monoid $M$ is an \emph{OHFM} (or an \emph{other-half-factorial monoid}) if for all $x \in M \setminus U(M)$ and $z, z' \in \mathsf{Z}(x)$ the equality $|z| = |z'|$ implies that $z = z'$.
\end{definition}

Clearly, every UFM is an OHFM. Although the multiplicative monoid of an integral domain is a UFM if and only if it is an OHFM~\cite[Corollary~2.11]{CS11}, OHFMs are not always UFMs or HFMs, even in the class of Puiseux monoids.

\begin{prop} \label{prop:OHF PM characterization}
	For a nontrivial atomic Puiseux monoid $M$, the following statements are equivalent.
	\begin{enumerate}
		\item $M$ is an OHFM.
		\vspace{2pt}
		
		\item $|\mathcal{A}(M)| \le 2$.
		\vspace{2pt}
		
		\item $M$ is isomorphic to a numerical monoid with embedding dimension $1$ or $2$.
	\end{enumerate}
\end{prop}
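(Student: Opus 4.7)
The plan is to establish the cycle $(3) \Rightarrow (1) \Rightarrow (2) \Rightarrow (3)$.

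For $(3) \Rightarrow (1)$, I would split into the two cases of embedding dimension. If the embedding dimension is $1$, then $M \cong (\nn_0, +)$, which is a UFM and therefore trivially an OHFM. If the embedding dimension is $2$, write $\mathcal{A}(M) = \{a_1, a_2\}$ and consider two factorizations $z = \alpha_1 a_1 + \alpha_2 a_2$ and $z' = \beta_1 a_1 + \beta_2 a_2$ of the same element with $|z| = |z'|$. The length equation $\alpha_1 + \alpha_2 = \beta_1 + \beta_2$ rewrites $\alpha_2 - \beta_2$ as $\beta_1 - \alpha_1$, and substituting into the element equation yields $(\alpha_1 - \beta_1)(a_1 - a_2) = 0$. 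Since $a_1 \ne a_2$ in $\qq$, we get $\alpha_1 = \beta_1$ and hence $\alpha_2 = \beta_2$, so $z = z'$.

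For $(2) \Rightarrow (3)$, atomicity of $M$ gives $M = \langle \mathcal{A}(M) \rangle$; since $|\mathcal{A}(M)| \le 2$, this is a finitely generated Puiseux monoid. Proposition~\ref{prop:fg PM are NM} then guarantees $M \cong N$ for some numerical monoid $N$, and the embedding dimension of $N$ equals $|\mathcal{A}(N)| = |\mathcal{A}(M)| \in \{1, 2\}$ because the set of atoms is preserved under monoid isomorphism.

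The main obstacle, as I expect, is $(1) \Rightarrow (2)$, which I would argue by contrapositive. Assume $|\mathcal{A}(M)| \ge 3$ and fix three distinct atoms $a_1, a_2, a_3$. The linear system
\[
	\gamma_1 + \gamma_2 + \gamma_3 = 0, \qquad \gamma_1 a_1 + \gamma_2 a_2 + \gamma_3 a_3 = 0
\]
has coefficient matrix of rank $2$ over $\qq$ (the $2 \times 2$ minor $a_2 - a_1$ is nonzero), so its solution space over $\qq$ is one-dimensional and thus contains a nonzero integer vector $(\gamma_1, \gamma_2, \gamma_3) \in \zz^3$. After clearing denominators, define $\alpha_i := \max(\gamma_i, 0)$ and $\beta_i := \max(-\gamma_i, 0)$ in $\nn_0$, so that $\alpha_i - \beta_i = \gamma_i$ for each $i$.

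Then $z := a_1^{\alpha_1} a_2^{\alpha_2} a_3^{\alpha_3}$ and $z' := a_1^{\beta_1} a_2^{\beta_2} a_3^{\beta_3}$ are two factorizations in $\mathsf{Z}(M)$ of a common element (because $\sum \gamma_i a_i = 0$) having equal length (because $\sum \gamma_i = 0$). Since $(\gamma_1, \gamma_2, \gamma_3) \ne (0,0,0)$, at least one coordinate differs between $(\alpha_1, \alpha_2, \alpha_3)$ and $(\beta_1, \beta_2, \beta_3)$, so $z \ne z'$. This contradicts the OHFM property and completes the proof. The only technicality here is ensuring that none of the chosen $a_i$ collapses the rank argument, which is precisely why picking three \emph{distinct} atoms suffices.
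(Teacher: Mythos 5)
Your proposal is correct, and for the key implication (1) $\Rightarrow$ (2) it is essentially the paper's argument in different clothing: the paper picks atoms $a_1 < a_2 < a_3$, clears denominators, and explicitly exhibits $m,n \in \nn$ with $ma_1 + na_3 = (m+n)a_2$, which is precisely a particular integer solution $(m, -(m{+}n), n)$ of your two-equation linear system; your rank argument reaches the same pair of distinct, equal-length factorizations slightly less constructively, and you correctly note (implicitly, via the sign-splitting) that the resulting element is nonzero and the factorizations differ. Where you genuinely diverge is (3) $\Rightarrow$ (1) in embedding dimension $2$: the paper simply cites an example of Coykendall and Smith for the fact that $\langle a,b\rangle$ is an OHFM, whereas you give a short self-contained computation — from $\alpha_1+\alpha_2=\beta_1+\beta_2$ and $\alpha_1 a_1+\alpha_2 a_2=\beta_1 a_1+\beta_2 a_2$ you deduce $(\alpha_1-\beta_1)(a_1-a_2)=0$ and hence $z=z'$. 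That direct argument is a small but real improvement in self-containedness, at no cost in length. Your (2) $\Rightarrow$ (3) matches the paper's use of Proposition~\ref{prop:fg PM are NM} exactly.
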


\begin{proof}
		To prove (1) $\Rightarrow$ (2), let $M$ be an OHFM. If $M$ is a UFM, then $M \cong (\nn_0,+)$, and we are done. Then suppose that $M$ is not a UFM. In this case, $|\mathcal{A}(M)| \ge 2$. Assume for a contradiction that $|\mathcal{A}(M)| \ge 3$. Take $a_1, a_2, a_3 \in \mathcal{A}(M)$ satisfying $a_1 < a_2 < a_3$. Let $d = \mathsf{d}(a_1) \mathsf{d}(a_2) \mathsf{d}(a_3)$, and set $a'_i = d a_i$ for each $i \in \ldb 1,3 \rdb$. Since $a'_1, a'_2$, and $a'_3$ are integers satisfying $a'_1 < a'_2 < a'_3$, there exist $m,n \in \nn$ such that
		\begin{equation} \label{eq:OHF}
			m(a'_2 - a'_1) = n(a'_3 - a'_2).
		\end{equation}
		Clearly, $z_1 := ma_1 + na_3$ and $z_2 := (m+n)a_2$ are two distinct factorizations in $\mathsf{Z}(M)$ satisfying $|z_1| = m+n = |z_2|$. In addition, after dividing both sides of the equality~(\ref{eq:OHF}) by $d$, one obtains $ma_1 + na_3 = (m+n)a_2$, which means that $z_1$ and $z_2$ are factorizations of the same element. However, this contradicts that $M$ is an OHFM. Hence $|\mathcal{A}(M)| \le 2$.
		
		To show that (2) $\Rightarrow$ (3), suppose that $|\mathcal{A}(M)| \le 2$. By~Proposition~\ref{prop:fg PM are NM}, $M$ is isomorphic to a numerical monoid $N$. As $|\mathcal{A}(M)| \le 2$, the embedding dimension of $N$ belongs to $\{1,2\}$, as desired.
		
		To argue that (3) $\Rightarrow$ (1), suppose that either $M \cong (\nn_0,+)$ or $M \cong \langle a, b \rangle$ for some $a, b \in \nn_{\ge 2}$ with $\gcd(a,b) = 1$. If $M \cong (\nn_0,+)$, then~$M$ is a UFM and, in particular, an OHFM. On the other hand, if $M \cong \langle a, b \rangle$, then it is an OHFM by~\cite[Example~2.13]{CS11}.
\end{proof}

\begin{example}
	There are Puiseux monoids that are FFMs but neither HFMs nor OHFMs. As a direct consequence  of Theorem~\ref{thm:increasing PM of Archimedean fields are FF} and Propositions~\ref{prop:HF PM characterization} and~\ref{prop:OHF PM characterization}, one finds that $\langle \frac{p-1}{p} \mid p \in \pp \rangle$ is one such monoid.
\end{example}

\section*{Acknowledgments}

While working on this project, the second author was supported by the NSF award DMS-1903069. The authors are grateful to Editor Susan Colley and two anonymous referees for many useful comments and suggestions that helped to improve the final version of this article.


	\noindent \footnotesize{{\bf Scott T. Chapman} is a Texas State University System Regents’ Professor and SHSU Distinguished Professor at Sam Houston State University in Huntsville, Texas. In December of 2016 he finished a five year appointment as Editor of the \textit{American Mathematical Monthly}. He is currently serving a three year term as Editor-in-Chief at \textit{Communications in Algebra}.  His editorial work, numerous publications in the area of non-unique factorizations, and years of directing REU Programs, led to his designation in 2017 as a Fellow of the American Mathematical Society.}

	\bigskip
	\noindent \footnotesize{{\bf Felix Gotti} received his Ph.D. degree from UC Berkeley in 2019, and he is currently an NSF Postdoctoral Fellow at the University of Florida. During the academic year 2018-2019, he was a research exchange scholar at Harvard University, where he completed his doctoral dissertation. Felix was also a research assistant at the University of Graz (Austria) in Summer 2019, working on arithmetic of monoids. His current research interest primarily lies in commutative algebra and geometric combinatorics.}
	
	\bigskip
	\noindent \footnotesize{{\bf Marly Gotti} holds the position of Senior Data Scientist at Biogen, where she contributes toward the development of drugs and therapies for neurological and neurodegenerative diseases. Along with her team at Biogen, Marly is currently investigating the effectiveness of a potential drug to treat Parkinson's disease. She obtained her Ph.D. degree in Pure Mathematics from the University of Florida. The theoretical flavor of her research lies in the area of factorization theory and arithmetic of commutative semigroups.}
	\bigskip

\end{document}